\newcommand{\commentr}[1]{\tcp*[r]{\parbox[t]{7.2cm}{\raggedright\normalfont{#1}}}}
\newcommand{\commentf}[1]{\tcp*[f]{\parbox[t]{7.2cm}{\raggedright\normalfont{#1}}}}
\definecolor{darkgreen}{rgb}{0.06, 0.5, 0.06}
\newcommand{\RS}[1]{}
\newcommand{\KL}[1]{}
\newcommand{\KS}[1]{}
\newcommand{\HT}[1]{}
\newcommand{\ML}[1]{\textcolor{blue}{{\sf (Matt:} {\sl{#1})}}}
\newcommand{\SA}[1]{}
\newcommand{\N}{\mathbb{N}}
\newcommand{\F}{\mathbb{F}}
\newcommand{\Z}{\mathbb{Z}}
\newcommand{\R}{\mathbb{R}}
\newcommand{\Dcal}{\mathcal{D}}
\newcommand{\Mcal}{\mathcal{M}}
\newcommand{\Gcal}{\mathcal{G}}
\newcommand{\Ccal}{\mathcal{C}}
\newcommand{\Max}{\mathrm{Max}}
\newcommand{\OP}[1]{\left( #1 \right)}
\newenvironment{customthm}[1]
  {\innercustomthm}
  {\endinnercustomthm}
\newtheorem{theorem}{Theorem}[section]
\newtheorem{proposition}[theorem]{Proposition}
\newtheorem{corollary}[theorem]{Corollary}
\newtheorem{lemma}[theorem]{Lemma}
\theoremstyle{definition}
\newtheorem{notation}[theorem]{Notation}
\newtheorem{definition}[theorem]{Definition}
\newtheorem{remark}[theorem]{Remark}
\begin{document}
 
\title{Connectivity of Markoff mod-p graphs and maximal divisors}
\author[J.~Eddy, E.~Fuchs, M.~Litman, D.~Martin, N.~Tripeny]{Jillian Eddy, Elena Fuchs, Matthew Litman, Daniel Martin, and Nico Tripeny}

\maketitle

\begin{abstract}
     Markoff mod-$p$ graphs are conjectured to be connected for all primes $p$. In this paper, we use results of Chen and Bourgain, Gamburd, and Sarnak to confirm the conjecture for all $p > 3.448\cdot10^{392}$. We also provide a method that quickly verifies connectivity for many primes below this bound. In our study of Markoff mod-$p$ graphs we introduce the notion of \emph{maximal divisors} of a number. We prove sharp asymptotic and explicit upper bounds on the number of maximal divisors, which ultimately improves the Markoff graph $p$-bound by roughly 140 orders of magnitude as compared with an approach using all divisors. 
\end{abstract}

\section{Introduction} 
The \textit{Markoff equation} is given by 
    \begin{equation}\label{Markoffequation}
        x^2+y^2+z^2=xyz,
    \end{equation}
and non-negative integer solutions $(a,b,c)$ to this equation are called \textit{Markoff triples}. An integer that is a member of such a triple is called a \textit{Markoff number}. Since their introduction by Andrey Markoff in \cite{Markoff}, Markoff triples have arisen in many different contexts across the mathematical landscape. Recently, Bourgain-Gamburd-Sarnak have explored various arithmetic properties of Markoff triples (see \cite{BGSannouncement}), proving that there are infinitely many composite Markoff numbers. A key ingredient in the proof of this fact is a combinatorial property that we describe below.

Markoff triples can be realized as vertices of a \emph{Markoff tree} as follows (note that Markoff triples with negative entries can be realized in a nearly identical way, but we focus on the positive triples here for ease of exposition). Let $R_1, R_2,$ and $R_3$ be involutions acting on triples of numbers defined by
\begin{equation}\label{involutions}
R_1(a,b,c)=(bc-a,b,c),\; R_2(a,b,c)=(a,ac-b,c),\; R_3(a,b,c)=(a,b,ab-c)
\end{equation}
and note that each of these involutions sends a Markoff triple to another Markoff triple. In fact, all positive Markoff triples can be realized as some word in these involutions applied to the triple $(3,3,3)$.

\begin{figure}[h!] 
    \label{fig1}
    \centering
    \includegraphics[scale=.45]{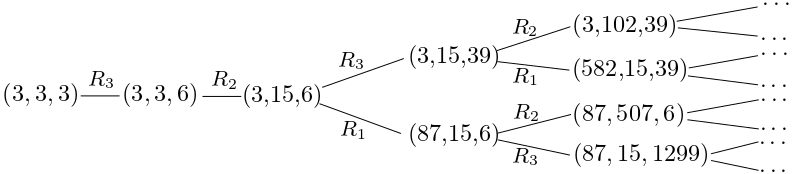}
    \caption{A branch of the Markoff tree generated by applying the involutions $R_1,R_2,R_3$ to the fundamental solution (3,3,3). \iffalse The tree shows one of several very similar branches of the tree, the others are generated by acting on $(1,1,1)$ via $R_1$ and $R_2$, as well as letting one other involution act on the triple immediately adjacent to $(1,1,1)$ (that other involution would be $R_1$ acting on $(1,1,2)$). Those branches will simply contain permutations of the triples shown in this tree.\fi}
    \label{Markofftree}
\end{figure}

In studying the arithmetic of Markoff numbers, it is natural to consider the solutions to (\ref{Markoffequation}) mod $p$: understanding this set is crucial to sieving on the set of Markoff numbers and is behind Bourgain-Gamburd-Sarnak's result on composite Markoff numbers. More specifically, it is useful to consider a version of the Markoff tree described above modulo primes $p$. These graphs $\Gcal_p$, which we call \emph{Markoff mod-$p$ graphs}, are constructed as follows. The vertex set of this graph is the set of nonzero solutions to (\ref{Markoffequation}) mod $p$, and two vertices $v_1,v_2$ are connected by an edge if 
$$R_i(v_1)\equiv v_2\pmod{p} \mbox{ for some } 1\leq i\leq 3.$$

Baragar was the first to conjecture that this graph is connected for any prime \cite{B}. A deep result of Bourgain-Gamburd-Sarnak in \cite{BGS} has confirmed this for all primes outside a density zero subset. Specifically, they show the following.

\begin{theorem}[Theorems 1, 2 Bourgain-Gamburd-Sarnak \cite{BGS}]\label{sizecomponent} Fix $\varepsilon>0$. Then for sufficiently large $p$ there is a connected component $\Ccal_p$ of $\Gcal_p$ for which
$$|\Gcal_p \backslash \Ccal_p| \leq p^\varepsilon$$
(note that $|\Gcal_p|\sim p^2$), and any connected component $\Ccal$ of $\Gcal_p$ satisfies
$|\Ccal|\gg (\log p)^{1/3}$. Moreover, for $\varepsilon' > 0$ and sufficiently large $t$, the number of primes $p \leq t$ for which $\Gcal_p$ is not connected is at most $t^{\varepsilon'}$.
\end{theorem}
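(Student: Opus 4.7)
The plan is to attack the three claims separately, leveraging the algebraic structure of the Vieta involutions and the conic bundle structure of the Markoff surface. A preliminary count establishes $|\Gcal_p| = p^2 + O(p)$: writing the Markoff equation as a quadratic in $z$ with discriminant $(xy)^2 - 4(x^2+y^2)$, the number of triples mod $p$ reduces to a quadratic character sum over $(x,y) \in \F_p^2$, which Weil-type bounds evaluate as $p^2 + O(p)$.

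For the minimum component size $|\Ccal| \gg (\log p)^{1/3}$, I would fix $v \in \Ccal$ and count distinct vertices reachable from $v$ by reduced words of length $\leq n$ in $R_1, R_2, R_3$. There are $3 \cdot 2^{n-1}$ such words, and two produce the same vertex only when their composition is a nontrivial element of $\langle R_1, R_2, R_3 \rangle$ fixing $v$. Since iterated compositions of the $R_i$'s are polynomial maps on the Markoff surface whose degrees grow in a controlled way, the locus of vertices fixed by short-word compositions is cut out by polynomials of bounded combined degree; balancing the exponential word count against the small fixed locus on the two-dimensional surface gives the bound, with the cube-root exponent reflecting the rate at which the fixed-locus degree grows relative to word length.

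The central obstacle is the $p^\varepsilon$ bound on the complement of the main component. Here I would invoke the ``rotation lemma'': on each slice $z = z_0$, the composition $R_1 R_2$ acts as a rotation of the conic $x^2 + y^2 + z_0^2 = xyz_0$, with order determined by the multiplicative order of a root of an explicit quadratic in $\F_{p^2}^\times$. Chaining these rotations across slices via $R_3$ would build a large connected ``skeleton'' $\Ccal_p \subseteq \Gcal_p$; the complement consists of triples lying on ``bad'' slices where all relevant rotations have small order, a restrictive algebraic condition cutting out $O(p^\varepsilon)$ points. The density-zero statement on exceptional primes would then follow by sieving: a prime admits a non-trivial cage only when several congruence conditions encoding small rotation orders are simultaneously satisfied, a set of density zero by an effective Chebotarev argument applied to the relevant splitting fields. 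The hardest part is quantifying the escape from bad slices and ruling out long chains of them — the heart of the Bourgain-Gamburd-Sarnak argument, and where their expansion-theoretic input is indispensable.
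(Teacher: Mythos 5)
This statement is not proven in the paper at all: it is Theorems 1 and 2 of Bourgain--Gamburd--Sarnak \cite{BGS}, imported as a black box, so there is no internal proof to compare against. Judged on its own terms, your proposal is an outline of the BGS strategy rather than a proof, and it has genuine gaps that you yourself partly acknowledge. The central one is the Middle Game: to show that $|\Gcal_p\setminus\Ccal_p|\leq p^{\varepsilon}$ one must prove that any triple having \emph{some} coordinate of multiplicative order exceeding $p^{\varepsilon}$ can be walked, via the rotations, to a triple of order roughly $p^{1/2+\delta}$ and thence to the cage. This step is quantitative and rests on a bound (Corvaja--Zannier type, the analogue of Lemma~\ref{prop:corvaja} in this paper) for the number of points in a rotation orbit of length $t$ whose new coordinate has order dividing $d$, namely $O(\max((td)^{1/3},td/p))$; without it, your "chaining rotations across slices to build a skeleton" has no mechanism for escaping a slice on which the rotation order is small. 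Saying the complement is "a restrictive algebraic condition cutting out $O(p^{\varepsilon})$ points" assumes exactly what must be shown. (The part that \emph{is} easy is bounding the number of triples all of whose coordinates have order $\leq p^{\varepsilon/2}$, since each such coordinate $r+r^{-1}$ ranges over at most $\sum_{d\leq p^{\varepsilon/2}}d$ values.)

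Two further issues: your argument for $|\Ccal|\gg(\log p)^{1/3}$ is a heuristic about "balancing word counts against fixed loci" with no actual degree bookkeeping; the BGS argument requires tracking the degrees of the coordinate polynomials of words in the rotations and exploiting that a nonzero polynomial relation of degree $D$ has at most $D$ roots, and the exponent $1/3$ comes out of that computation, not from a general principle you can wave at. And the density-zero statement for exceptional primes does not follow from an effective Chebotarev argument; in \cite{BGS} it comes from counting primes $p\leq t$ for which $p^2-1$ is abnormally smooth (has too many small divisors), which is a sieve bound on smooth shifted primes, a different beast. In short, your sketch correctly identifies the architecture of the original proof but supplies none of the load-bearing estimates, so it cannot stand as a proof of the theorem.
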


The bound on $|\Gcal_p\backslash\Ccal_p|$ was thereafter made much more explicit in \cite{KMSV}, where it was shown that the exponent of $1/3$ in the bound on $|\Ccal|$ can be improved to $7/9$. Bourgain-Gamburd-Sarnak conjecture that these graphs make up an expander family, and this has been further explored in \cite{dCL} and \cite{dCM} (from which it appears that certain subfamilies of this family are actually Ramanujan).

Subsequently, Chen \cite{chen} proved that the size of any connected component of $\Gcal_p$ must be divisible by $p$. This implies that if $\Gcal_p$ is disconnected, meaning $\Gcal_p\backslash\Ccal_p$ is a nonempty union of connected components, then $|\Gcal_p\backslash\Ccal_p|\geq p$. So by making explicit the phrase ``sufficiently large" in Theorem \ref{sizecomponent}, particularly for $\varepsilon=1$, we obtain a lower bound on primes $p$ for which $\Gcal_p$ is necessarily connected. 

In Section \ref{firstbound} we refine the arguments in \cite{BGS} and make their asymptotic bounds explicit. The result combines with Chen's theorem to prove that $\Gcal_p$ is connected for $p > 10^{532}$ (Corollary \ref{cor:prelim}). 

Section \ref{MaxDivsec} introduces maximal divisors, the main tool behind further reduction to our $p$-bound. 

\begin{definition}
Let $n$ be a positive integer, and let $x\in\R$. A positive divisor $d$ of $n$ is \emph{maximal with respect to $x$} if $d\leq x$ and there is no other positive divisor $d'$ of $n$ such that $d'\leq x$ and $d\,|\,d'$.  The set of maximal divisors with respect to $x$ is denoted $\Mcal_x(n)$.\end{definition}

In other words, a maximal divisor is a maximal element in the partially ordered (by divisibility) set of divisors of $n$ that are less than $x$.

This definition is motivated by a task that appears often in \cite{BGS}: to bound a sum over the union of subgroups of order at most $x$ in the cyclic group of order $n$. Since a group element may belong to many such subgroups, overcounting is avoided by rewriting the sum using inclusion-exclusion, and the very first term of the result is a sum over maximal divisors. (Details are in the next section.) 

Our approach in Section \ref{MaxDivsec} is designed to give explicit bounds on $|\Mcal_x(n)|$ for any $x$ and for computationally-feasible sized $n$---up to $10^{532}$ as dictated by Corollary \ref{cor:prelim}. But our approach also happens to furnish a simple proof of a sharp asymptotic bound.

\begin{theorem}\label{thm:intro}For any $\varepsilon > 0$, if $\alpha\in[\varepsilon,1-\varepsilon]$ then $$\log|\Mcal_{n^\alpha}(n)|=\log\!\left(\frac{1}{\alpha^\alpha(1-\alpha)^{1-\alpha}}\right)\!\frac{\log n}{\log\log n}+O\!\left(\frac{\log n}{(\log\log n)^2}\right).$$ The implied constant depends only on $\varepsilon$.\end{theorem}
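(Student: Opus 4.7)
My plan is to establish matching upper and lower bounds, interpreting the theorem as sharpness of the maximum of $|\Mcal_{n^\alpha}(n)|$ over $n$ of a given size: the upper bound will hold for every $n$, while the lower bound will be realized by primorials.

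\textbf{Upper bound.} Since any maximal divisor is in particular a divisor of $n$ at most $n^\alpha$, it suffices to bound $|\{d\mid n: d\leq n^\alpha\}|$. For any $s>0$ the exponential Markov inequality gives
\[
|\{d\mid n : d\leq n^\alpha\}|\;\leq\;\sum_{d\mid n}(n^\alpha/d)^s \;=\; n^{\alpha s}\sigma_{-s}(n),
\]
so after taking logarithms and writing $\sigma_{-s}(n)=\prod_{p^e\|n}(1+p^{-s}+\cdots+p^{-es})$, the task is to minimize $\alpha s\log n+\log\sigma_{-s}(n)$ over $s>0$. A monotonicity argument (replacing $p^2$ by $p\cdot q$ for a new small prime $q$ increases $\sigma_{-s}$ for the same $\log n$) shows the right-hand side is maximized, for fixed $\log n$, by a primorial $n=\prod_{p\leq T}p$ with $\theta(T)=\log n$; for these I estimate $\sum_{p\leq T}\log(1+p^{-s})$ using the PNT and the substitution $u=\log p/\log T$. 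Optimizing yields $s=(\log T)^{-1}\log((1-\alpha)/\alpha)$ and minimum value $H(\alpha)\log n/\log\log n+O(\log n/(\log\log n)^2)$, with $H(\alpha)=\log(1/(\alpha^\alpha(1-\alpha)^{1-\alpha}))$.

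\textbf{Lower bound.} Take $n=\prod_{p\leq T}p$, so $\log n\sim T$ and $k:=\omega(n)\sim T/\log T$ by the PNT. Writing divisors as $d_S=\prod_{p\in S}p$, the divisor $d_S$ is maximal with respect to $n^\alpha$ iff $d_S\leq n^\alpha$ and $d_S\cdot q(S)>n^\alpha$, where $q(S)$ is the smallest prime not in $S$. Restricting to $S$ with $2\notin S$ forces $q(S)=2$ and reduces maximality to $d_S\in(n^\alpha/2,\,n^\alpha]$. Further restricting to subsets of size $\lfloor\alpha(k-1)\rfloor$, Stirling produces $\exp(H(\alpha)\log n/\log\log n+O(\log n/(\log\log n)^2))$ such subsets in total, and a second-moment computation on $\sum_{p\in S}\log p$ for uniformly random $S$ of that size gives variance $O(\log n)$; a local-limit argument then shows at least a $1/\sqrt{\log n}$ fraction of these subsets land in the window, and the $O(\log\log n)$ loss is swallowed by the claimed error.

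\textbf{Main obstacle.} The main subtlety will be universality of the upper bound: it must hold for all $n$, not just primorials, with the sharp error $O(\log n/(\log\log n)^2)$. Extremality of primorials follows from the monotonicity argument on the divisor-power trade-off, but tracking the error requires using the PNT in the form $\theta(T)=T+O(T/\log T)$ at each step, so that the replacement of prime sums by integrals costs exactly $O(\log n/(\log\log n)^2)$ at the chosen optimal $s$; on the lower-bound side, confirming that the subset-sum distribution spreads its mass sufficiently evenly across a window of width $\log 2$ is the analogous delicate point.
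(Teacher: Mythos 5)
Your overall architecture (a universal upper bound plus a primorial lower bound) is a reasonable reading of the theorem, and your Rankin-type upper bound is a genuinely different route from the paper, which instead reduces an arbitrary $n$ to a nearby ``reduced'' integer via an elaborate system of reducing functions (Section~\ref{sec:reduce}), shows such integers satisfy $\Omega(n)=\log n/\log\log n+O(\log n/(\log\log n)^2)$, and then bounds $|\Mcal_x(n)|$ by a single coefficient $C_k(n)$ using a symmetric-chain decomposition of the divisor lattice (Lemmas~\ref{lem:multiset} and~\ref{lem:combo}) before applying Stirling. However, your upper bound has a fatal gap for $\alpha>1/2$: your very first step discards maximality and bounds $|\{d\mid n:\,d\leq n^\alpha\}|$ instead. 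For $\alpha>1/2$ at least half of all divisors of $n$ are $\leq n^\alpha$, so this quantity is at least $\tau(n)/2$, which for a primorial is $\exp((\log 2+o(1))\log n/\log\log n)$; since $H(\alpha)=\log(1/(\alpha^\alpha(1-\alpha)^{1-\alpha}))<\log 2$ strictly for $\alpha\neq 1/2$, no optimization of $s$ can recover the claimed constant on the range $(1/2,1-\varepsilon]$. You must exploit maximality there, e.g.\ via the involution $d\mapsto n/d$, which sends maximal divisors with respect to $n^\alpha$ to ``minimal'' divisors $\geq n^{1-\alpha}$ confined to a short range; the paper instead proves the case $\alpha\geq 1/2$ first and deduces $\alpha<1/2$ by exactly this complementation.

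The second genuine problem is the reduction to primorials, which is the heart of the matter and is not established by your parenthetical swap. If $n$ is already divisible by all primes up to some bound, there is no ``new small prime'' $q$, and replacing $p^2$ by $pq$ with a new (hence larger) prime changes $\log n$, so the comparison ``for the same $\log n$'' does not go through. Moreover the literal claim is false: at fixed $s$ and fixed size budget, $\sigma_{-s}(n)$ is \emph{not} maximized by a primorial (already $\sigma_{-s}(2\cdot p_k\#)>\sigma_{-s}(p_k\#)$, and the true extremizers carry growing powers of small primes, as with superior highly composite numbers). What is true is that the surplus coming from higher prime powers is absorbed by the $O(\log n/(\log\log n)^2)$ error term, but quantifying that is precisely the technical content the paper spends Section~\ref{sec:reduce} and Lemma~\ref{lem:cubes} on, and your proposal currently asserts it rather than proves it. On the lower bound (which the paper's written proof does not actually carry out, so you are attempting strictly more), the variance of the subset sum is $O(\log n\log\log n)$ rather than $O(\log n)$ --- harmless --- but the local-limit statement that a $1/\mathrm{polylog}$ fraction of the $\lfloor\alpha(k-1)\rfloor$-subsets land in the specific window $(\,\alpha\log n-\log 2,\ \alpha\log n\,]$ is a nontrivial anti-concentration claim about subset sums of $\{\log p\}$ that needs an actual argument, not just a second-moment bound.
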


As an immediate corollary, we also obtain a similar bound on the total number of divisors of $n$ less than $x$ (Corollary \ref{cor:asym}). These results can be viewed as generalizations of Wigert's theorem: $\log\tau(n) = (\log 2+o(1))\log n/\log\log n$, where $\tau(n)$ is the number of divisors of $n$ \cite{Wigert}. (The constant $\log 2$ is recovered by setting $\alpha=1/2$ in Theorem \ref{thm:intro}.)

In Section \ref{prooftheorem} we use our work on maximal divisors to prove our main result.

\begin{theorem}\label{thm:main}$\Gcal_p$ is connected for all primes $p > 863\#53\#13\#7\#5\#3^32^5\approx 3.448\cdot10^{392}$, where $n\#$ denotes the product of primes less than or equal to $n$.\end{theorem}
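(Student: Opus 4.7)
The plan is to refine the proof of Corollary \ref{cor:prelim} by replacing sums over all divisors of $n$ with sums over maximal divisors of $n$. Recall that Corollary \ref{cor:prelim} comes from making the Bourgain–Gamburd–Sarnak argument in \cite{BGS} explicit and combining it with Chen's theorem \cite{chen} that every connected component of $\Gcal_p$ has order divisible by $p$: if $\Gcal_p$ were disconnected, some component $\Ccal\neq\Ccal_p$ would satisfy $|\Ccal|\geq p$, and an explicit BGS-style bound on $|\Gcal_p\setminus\Ccal_p|$ would contradict this once $p$ is large enough. I want to replace the crude divisor-counting step in that bound by the sharper count $|\Mcal_x(n)|$ developed in Section \ref{MaxDivsec}.

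First, I would revisit the step in the BGS argument that bounds a sum indexed by subgroups of order at most $x$ in a cyclic group of order $n$, where $n$ is a divisor of $p\pm 1$ (or of $p^2-1$). As noted just before Theorem \ref{thm:intro}, inclusion–exclusion rewrites such a sum with leading term indexed by $\Mcal_x(n)$, and the lower-order terms are themselves sums over maximal divisors of divisors of $n$. Thus the explicit upper bounds on $|\Mcal_x(n)|$ from Section \ref{MaxDivsec} are the natural replacement for the trivial bound $\tau(n)$ used to obtain Corollary \ref{cor:prelim}. Plugging this improvement into every subgroup-counting step of the BGS argument produces a strictly smaller $p$-threshold.

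Second, I would solve the resulting optimization problem. Combining the refined counting bound with Chen's divisibility constraint gives an inequality in $p$ and $n$ which, to fail, forces $p$ to be small; the worst case occurs at integers $n$ maximizing $|\Mcal_x(n)|$. By the explicit structure of the bounds in Section \ref{MaxDivsec}, these extremizers are highly-composite-type integers with small primes accumulating larger exponents. A careful discrete optimization over prime exponents should reveal that the extremizer has exactly the primorial shape $863\#\,53\#\,13\#\,7\#\,5\#\,3^3\,2^5$ displayed in the theorem statement. Evaluating the inequality at this particular $n$ then yields $p>3.448\cdot10^{392}$, giving the roughly $140$-order-of-magnitude improvement over Corollary \ref{cor:prelim} advertised in the abstract.

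The main obstacle will be carrying out this optimization explicitly rather than asymptotically. Unlike the clean form of Theorem \ref{thm:intro}, what is needed here is a bound tight up to small multiplicative factors, uniformly across the range of $n$ dictated by Corollary \ref{cor:prelim}. Verifying that no other exponent pattern beats the one in the theorem statement will require a local-swap argument showing that trading a factor of a small prime for a larger one always harms the objective, together with a check that the boundary primes $863,53,13,7,5$ are exactly right. Once this extremal shape is pinned down, the final numerical evaluation—and the identification of the cutoff $3.448\cdot10^{392}$—is a routine computation.
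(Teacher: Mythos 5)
Your first step—replacing the sum over all divisors $d'\leq d$ of $p\pm1$ in the Corvaja--Zannier bound by a sum over $\Mcal_d(p\pm1)$—is exactly what the paper does (Theorem \ref{thm:Md}, obtained from Proposition \ref{thm:Td} by swapping $T_d$ for $M_d$; the justification is simply that every subgroup of order $\leq d$ is contained in one whose order is a maximal divisor, so no inclusion--exclusion tail is needed). The second interval is also handled as you implicitly assume: it is empty already for $p>10^{141}$, so only the first interval matters.

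The gap is in your second step. You assert that ``a careful discrete optimization over prime exponents,'' via a local-swap argument, will reveal that the extremizer is $863\#53\#13\#7\#5\#3^32^5$ and that evaluating the inequality there gives the threshold. This conflates two different things and omits the mechanism that makes the computation possible. First, that number is not an extremizer of $|\Mcal_x(n)|$; it is the largest \emph{reduced} integer $n$ for which the connectivity test still fails, and the test is a comparison of $n$ against $8(3C_k(n))^8$ for an iteratively refined index $k$, not a maximization of a single objective. Second, the local-swap idea is genuinely needed, but it must be formalized: the paper's reducing functions (Definition \ref{def:redf}, Theorem \ref{thm:inject}, Lemmas \ref{lem:expadd} and \ref{lem:2exp}) construct explicit injections $\Mcal_x(n)\hookrightarrow\Mcal_x(2^am)$ so that every $n$ can be replaced by a reduced $m\in[n,4n-6]$ with $|\Mcal_x(n)|\leq|\Mcal_x(m)|$ (Theorem \ref{thm:redn}); without this one cannot legitimately restrict to ``highly-composite-type'' integers, since the superior-highly-composite theory for $\tau$ does not transfer automatically to $\Mcal_x$. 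Even after that reduction there remain $16{,}899$ reduced integers below $4\cdot10^{532}$, and for each one the bound $M_d\leq 2C_k(n)$ from the symmetric-chain decomposition (Lemmas \ref{lem:multiset} and \ref{lem:combo}) must be bootstrapped: a first choice $k=\lfloor\Omega(n)/2\rfloor$ constrains $d<162C_k(n)^3$, which permits a smaller admissible $k$, and so on (Algorithm \ref{alg}, whose correctness is itself a theorem requiring proof). The cutoff $3.448\cdot10^{392}$ is the output of this computation, not of a closed-form optimization; a hand argument that ``the boundary primes $863,53,13,7,5$ are exactly right'' is not available, and without the algorithm and its correctness proof your plan does not produce the stated constant.
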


The lower bound in Theorem \ref{thm:main} was output by a computer using Algorithm \ref{alg}, which determines the exact point at which our method for proving connectivity via maximal divisors fails.

Finally, in Section~\ref{data} we provide data on the proportion of smaller primes for which we can also verify connectivity of $\Gcal_p$. As Table~\ref{tab:percentage} shows, our approach begins to work for a significant proportion of primes at around $10^8$, and for $22\leq n\leq 90$ it proves connectivity for 10,000 out of 10,000 randomly chosen primes between $10^n$ and $10^{n+1}$. Note that there are still primes for which our connectivity check fails up until the bound from Theorem \ref{thm:main}. Table~\ref{tab:percentage}'s success for smaller primes is due to the expected number of divisors of $p\pm 1$ being much less than the maximum possible number of divisors. This ability to check for connectivity for smaller primes would be useful, for example, in a recent application of Markoff triples to a cryptographic hash function in \cite{FLLT}, in which one needs to be able to check connectivity of a Markoff mod-p graph for a specific large (but still manageable using our criterion) prime $p$ in order to construct the hash.

Interestingly, our data reveals that already for primes of size $10^{31}$, the Erd\"{o}s-Kac theorem takes over in the sense that the expected value of $\tau(p\pm1)$ is small enough so that it becomes extremely rare to need the improvement that comes by considering maximal divisors rather than all divisors. This is one hint that  our methods via maximal divisors alone will not prove connectivity of all Markoff graphs, and that this will require new insight.

\vspace{0.1in}

\noindent{\bf Acknowledgements:} This project was started at the UC Davis 2021 REU, and we thank Javier Arsuaga and Greg Kuperberg for the REU's creation and organization. We also thank Matthew de Courcy-Ireland for helpful conversations and comments on this work.


\section{A preliminary bound}\label{firstbound}

In this section, we prove a preliminary bound towards Theorem~\ref{thm:main}, which will not only serve to introduce the reader to the key points of our main argument, but will also be necessary in the proof of Theorem~\ref{thm:main}. The Appendix, which serves to make several statements in \cite{BGS} more precise, will feed into the technical details of the proofs.

We use the following parameterization, which matches that of Bourgain, Gamburd, and Sarnak up to a change of variables (equations (15), (16), and (18) in \cite{BGS}). A triple $(a,b,c)\in\F_p$ with $a\neq 0,\pm2$ solves $x^2+y^2+z^2=xyz$ if and only if it is of the form \begin{equation}\label{eq:7}\left(r+r^{-1},\,\frac{(r+r^{-1})(s+s^{-1})}{r-r^{-1}},\,\frac{(r+r^{-1})(rs+r^{-1}s^{-1})}{r-r^{-1}}\right)\end{equation} for some $r,s\in\F_{p^2}$. The orbit of this triple under the Vieta involutions that fix the first coordinate, called $R_2$ and $R_3$ in (\ref{involutions}), consists precisely of triples of the form \begin{equation}\label{eq:rotorbit}\left(r+r^{-1},\,\frac{(r+r^{-1})(r^{2n}s+r^{-2n}s^{-1})}{r-r^{-1}},\,\frac{(r+r^{-1})(r^{2n\pm1}s+r^{2n\pm1}s^{-1})}{r-r^{-1}}\right)
\end{equation}
for some $n\in\Z$, and one can similarly describe the orbits that fix the second or third coordinate, as well. So the number of triples in this orbit depends on the multiplicative order of $r$ in $\F_{p^2}^*$.

Note that in \cite{BGS}, connectivity is proven for a slightly modified Markoff mod-$p$ graph, where the edges are defined not by the involutions $R_i$ as above, but by so-called rotations that they denote $\textrm{rot}(x_k)$, but this is in essence the same as product $R_iR_j$ where $\{i,j,k\}=\{1,2,3\}$.

Our strategy, based off of \cite{BGS}, is to assign an order to every triple in $\Gcal_p$ as follows. Given $a=r+r^{-1}$ as above, let $\textrm{ord}_p(a)$ be the multiplicative order of $r$ in $\F_{p^2}^*$. This agrees with the notion of order in \cite{BGS} (see their equations (8) and (9)) unless $a=\pm 2$, but it is shown in \cite{BGS} that a triple with $\pm 2$ in some coordinate is necessarily in the large connected component, so we need not consider this case for our purposes. Define the order of $(a,b,c)$ to be
\begin{equation}\label{order}
\textrm{Ord}_p((a,b,c)):=\max\{\text{ord}_p(a),\text{ord}_p(b),\text{ord}_p(c)\}\end{equation}

One of the key ideas in Bourgain-Gamburd-Sarnak's proof of the connectivity of $\Gcal_p$ is that, if a triple $(a,b,c)\in \Gcal_p$ has large enough order in the above sense, then there is always a triple of larger order in one of the orbits of 
$\langle R_i,R_j\rangle$ acting on $(a,b,c)$. One then walks along these orbits in what Bourgain-Gamburd-Sarnak call the Middle Game of the proof, increasing the order gradually, until one gets to a triple of order roughly $p^{1/2}$ (see Proposition~\ref{preciseBGSEG} in our Appendix for a precise statement), which is then necessarily connected to the large connected component $\mathcal C_p$ in Theorem~\ref{sizecomponent}. So, all triples of large enough order are connected to each other, and the question is then, how many triples potentially do not have large enough order, and hence may not be in $\mathcal C_p$? According to Chen \cite{chen}, the number of these bad triples not connected to $\mathcal C_p$ must be divisible by $p$. Hence, if we can show that this number is strictly less than $p$, we may deduce that there are no bad triples at all and, in fact, $\Gcal_p$ is connected. In fact, we can loosen this a bit as we explain in Lemma~\ref{lem:divisible} below.

We recall that a central ingredient in the Middle Game of \cite{BGS} is an upper bound on the number of triples of order at most $t$ in the orbit (\ref{eq:rotorbit}) and its analogues in which coordinates other than the first one are fixed. Without loss of generality, assume this maximal coordinate is the first one. Using the parametrization in (\ref{eq:rotorbit}), we have the following lemma, which sharpens the bound used by Bourgain-Gamburd Sarnak at the start of Section 4 in \cite{BGS} when they reference a bound by Corvaja-Zannier in \cite{corvaja}.

\begin{lemma}\label{prop:corvaja}If $r\in\F_{p^2}^*$ has order $t>2$, then the number of congruence classes $n\pmod{t}$ for which \emph{ord}$_p((r+r^{-1})(sr^n+(sr^n)^{-1})/(r-r^{-1}))$ divides $d$ is at most $\tfrac{3}{2}\max((6td)^{1/3},4td/p)$.
\end{lemma}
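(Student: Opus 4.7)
The plan is to convert the order-divisibility condition into an algebraic equation for $v=sr^n$, then bound the number of $n\pmod{t}$ whose image $sr^n$ satisfies that equation, via a Corvaja--Zannier-style intersection bound for multiplicative subgroups.

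Setting $\gamma=(r+r^{-1})/(r-r^{-1})\in\F_{p^2}^*$ and $v=sr^n$, the quantity whose order we control is $a=\gamma(v+v^{-1})$. The condition $\textrm{ord}_p(a)\mid d$ is equivalent to the existence of $u\in\mu_d:=\{u\in\F_{p^2}^*:u^d=1\}$ with $u+u^{-1}=\gamma(v+v^{-1})$. Since $u+u^{-1}$ takes at most $\lceil d/2\rceil+1$ distinct values as $u$ ranges over $\mu_d$, the admissible $v$ lie in the zero set of the polynomial
\[
P(X)=\prod_{c}(X^2-cX+1)\in\F_{p^2}[X],
\]
where $c$ ranges over $\gamma^{-1}\{u+u^{-1}:u\in\mu_d\}$, so $\deg P\leq d+2$. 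The count to estimate equals $\#\{w\in\langle r\rangle:P(sw)=0\}$, i.e., the size of the intersection of the coset $s\langle r\rangle$ with the zero set of $P$.

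The first bound $\tfrac{3}{2}(6td)^{1/3}$ comes from viewing this as $|C\cap(\mu_d\times s\langle r\rangle)|$ on the plane curve $C:u^2v-\gamma(v^2+1)u+v=0$ and projecting to the second coordinate, using that each $v$ lies under at most two $u$'s to save a factor of $2$. Since $C$ is not a translate of a subtorus of $(\F_{p^2}^*)^2$, an intersection bound of Corvaja--Zannier type for curves meeting products of cyclic multiplicative subgroups of orders $d$ and $t$ yields a count of order $(td)^{1/3}$. To pin down the explicit constants, I would write a self-contained Stepanov auxiliary-polynomial argument specific to $P$: construct an auxiliary polynomial vanishing to high order at each $w\in\langle r\rangle$ with $P(sw)=0$, and derive a contradiction from its degree if the number of such $w$ exceeds $\tfrac{3}{2}(6td)^{1/3}$.

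The alternative bound $\tfrac{3}{2}\cdot 4td/p$ dominates when $\langle r\rangle$ is large relative to $p$, where the Stepanov estimate becomes vacuous. Here I would use a direct character-sum/Weil-type argument: expand the indicator of $\langle r\rangle\subset\F_{p^2}^*$ as a sum of multiplicative characters of order dividing $(p^2-1)/t$, and bound each nontrivial character sum against $P$ by Weil's theorem; this gives approximately $(\deg P)\cdot t/p$ plus a lower-order error, absorbable into $4td/p$. The main obstacle will be pinning down the precise constants $3/2$, $6^{1/3}$, and $4$: since the standard Corvaja--Zannier bound in the literature carries an unspecified implicit constant, obtaining a clean, tight constant likely requires the self-contained auxiliary-polynomial argument tailored to the degree-$2$ (in $u$) structure of $C$ and the quadratic factorization of $P$.
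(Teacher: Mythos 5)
Your reduction of the order condition to counting points of a curve on a product of multiplicative subgroups, and the factor-of-$2$ saving from the $u\leftrightarrow u^{-1}$ (equivalently $y\leftrightarrow y^{-1}$) symmetry of the fiber, both match the structure of the paper's argument. But there is a genuine gap: the entire quantitative content of the lemma --- the explicit constants $\tfrac{3}{2}$, $6^{1/3}$, and $4/p$ --- is deferred to a ``self-contained Stepanov auxiliary-polynomial argument'' and a separate character-sum argument that you do not carry out, and your stated reason for needing them (``the standard Corvaja--Zannier bound in the literature carries an unspecified implicit constant'') is false. Theorem~2 of Corvaja--Zannier is fully explicit: for rational functions $u,v$ on a smooth curve $C$ it bounds the number of solutions of $u=v=1$ by $3\max\bigl((2\chi\deg u\deg v)^{1/3},\,4\deg u\deg v/p\bigr)$, where $\chi$ is the Euler characteristic of $C$ with the zeros and poles of $u,v$ removed. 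Both branches of the $\max$ in the lemma come from this single statement; no separate Weil/character-sum argument is needed for the $4td/p$ regime.

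The actual work in the paper's proof is therefore not the one you planned. One homogenizes the equation into the projective cubic
\[
\frac{s(r+r^{-1})}{r-r^{-1}}X^2Y-XY^2-XZ^2+\frac{r+r^{-1}}{s(r-r^{-1})}YZ^2=0,
\]
checks it is smooth (using $r+r^{-1}\neq 0$ and $r+r^{-1}\neq\pm(r-r^{-1})$), takes $u=(X/Z)^t$ and $v=(Y/Z)^d$, observes that the relevant zeros and poles on $C$ are exactly the three coordinate points, and computes $\chi=3+2\binom{2}{2}-2=3$, whence $2\chi\deg u\deg v=6td$ and the constant $6^{1/3}$ falls out. Your route also has secondary problems you would need to resolve: your polynomial $P$ has degree up to $d+2$, so a naive count along the coset $s\langle r\rangle$ gives $t(d+2)$ rather than $td$ in the bound; and a Stepanov-type argument for a coset of a cyclic subgroup of $\F_{p^2}^*$ of order $t\mid p^2-1$ (not $p-1$) with a sharp leading constant is a substantial undertaking, not a routine adaptation. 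As written, the proposal identifies the correct geometry but does not prove the stated inequality.
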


\begin{proof}The number of congruence classes in question is bounded by half the number of solutions $(x,y)\in \overline{\F_p}^2$ to the system of equations $x^{t}=1$, $y^{d}=1$, and $$\frac{(r+r^{-1})(sx+(sy)^{-1})}{r-r^{-1}}=y+y^{-1}.$$ (We halve the number of solutions because $(x,y)$ and $(x,y^{-1})$ only give one congruence class, yet get counted as distinct solutions unless $y=\pm1$. But as mentioned in the introduction, the case $y=\pm 1$ is ignored as any triple with coordinate $\pm2$ is known to be in $\Ccal_p$.) Solutions to the last equation above lie on the projective curve $C$ defined by 
\begin{equation}\label{czcurve}
\frac{s(r+r^{-1})}{r-r^{-1}}X^2Y-XY^2-XZ^2+\frac{r+r^{-1}}{s(r-r^{-1})}YZ^2=0.
\end{equation}
Assume $r+r^{-1}\neq 0$ since otherwise the proposition is trivial to check (and not useful). Along with $r+r^{-1}\neq\pm (r-r^{-1})$, which is always true, this implies $C$ is smooth. Therefore we can apply Theorem 2 in \cite{corvaja} to the rational functions $u([X,Y,Z])=(X/Z)^{t}$ and $v([X,Y,Z])= (Y/Z)^{d}$. The zeros and poles of $u$ or $v$ that lie on $C$ are $[1,0,0]$, $[0,1,0]$, and $[0,0,1]$. The Euler characteristic of $C\backslash\{[1,0,0],[0,1,0],[0,0,1]\}$ as defined in \cite{corvaja} is $$\chi=\big|\{[1,0,0],[0,1,0],[0,0,1]\}\big|+2\binom{\deg C-1}{2}-2 = 3.$$ By \cite{corvaja}, the number of points on $C$ that solve $u([X,Y,Z])=v([X,Y,Z])=1$ is bounded from above by $3\max((2\chi\deg u\deg v)^{1/3},4\deg u\deg v/p)$. The claim follows.\end{proof}

In Section~\ref{firstbound}, we mentioned Chen's result from \cite{chen} that any connected component in $\mathcal G_p$ has size divisible by $p$. We combine this with a few observations about the Markoff graphs to yield the following.

\begin{lemma}\label{lem:divisible}If $p>3$, then the number of vertices in $\Gcal_p\backslash \mathcal C_p$ is divisible by $4p$.
\end{lemma}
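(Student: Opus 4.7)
The plan is to promote Chen's divisibility by $p$ to divisibility by $4p$ by exhibiting a free action of a group of order $4$ on $\Gcal_p\setminus\Ccal_p$. Chen's theorem says every connected component of $\Gcal_p$ has size divisible by $p$, and $\Gcal_p\setminus\Ccal_p$ is a disjoint union of such components, so the factor of $p$ is immediate. Since $p>3$ is odd, $\gcd(4,p)=1$, and the two divisibilities will combine multiplicatively once the factor of $4$ is in hand.

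To produce that factor, I introduce the sign-change involutions
\[
\sigma_1(a,b,c)=(a,-b,-c),\quad \sigma_2(a,b,c)=(-a,b,-c),\quad \sigma_3(a,b,c)=(-a,-b,c).
\]
Each negates exactly two coordinates, so both $x^2+y^2+z^2$ and $xyz$ are unchanged, and $\{e,\sigma_1,\sigma_2,\sigma_3\}$ forms a Klein four-group $V\cong(\Z/2\Z)^2$. A short mechanical computation—one that I would write out explicitly in the proof—checks $\sigma_i\circ R_j=R_j\circ\sigma_i$ for all $i,j\in\{1,2,3\}$, so $V$ acts on $\Gcal_p$ by graph automorphisms. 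The action is free on $\Gcal_p$: if some nontrivial $\sigma_i$ fixes $(a,b,c)$, then two of the coordinates satisfy $2x\equiv 0\pmod p$, so (since $p>2$) they vanish, and the Markoff equation forces the third coordinate to vanish as well; but $(0,0,0)\notin\Gcal_p$.

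It remains to show $V$ preserves $\Ccal_p$, and hence its complement. Each $\sigma_i$, being a graph automorphism, sends $\Ccal_p$ to a connected component of the same size, and Theorem~\ref{sizecomponent} (together with $|\Gcal_p|\sim p^2$) guarantees that $\Ccal_p$ is uniquely the component whose complement has size at most $p^\varepsilon$, leaving no room for any other component of comparable size; therefore $\sigma_i(\Ccal_p)=\Ccal_p$. Restricted to $\Gcal_p\setminus\Ccal_p$, the action of $V$ is then both well-defined and free, so every orbit has size exactly $|V|=4$, giving $4\mid|\Gcal_p\setminus\Ccal_p|$. Combining with Chen's divisibility by $p$ and the coprimality $\gcd(4,p)=1$ yields $4p\mid|\Gcal_p\setminus\Ccal_p|$. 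The one genuinely non-formal step is the invariance of $\Ccal_p$, which I expect to be the main obstacle only in name: it is clean because $\Ccal_p$ is the uniquely large component; all other ingredients (the commutation $\sigma_i R_j=R_j\sigma_i$ and the absence of nontrivial sign-fixed points in $\Gcal_p$) reduce to a line or two of calculation.
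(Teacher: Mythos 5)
Your overall architecture matches the paper's: the Klein four-group $V$ of coordinate-pair sign changes acts on $\Gcal_p$ by graph automorphisms (the commutation $\sigma_iR_j=R_j\sigma_i$ and the freeness argument via ``no triple has two zero coordinates'' are both correct and are exactly what the paper uses), and combining the resulting factor of $4$ with Chen's factor of $p$ finishes the proof. The one step you flag as non-formal---that $V$ preserves $\Ccal_p$---is, however, where your argument has a genuine gap. You justify $\sigma_i(\Ccal_p)=\Ccal_p$ by appealing to Theorem~\ref{sizecomponent}, under which $\Ccal_p$ is the unique giant component. But that theorem only holds for \emph{sufficiently large} $p$ in an ineffective sense, whereas the lemma is asserted for all $p>3$ and is genuinely needed for explicit primes (it feeds into Proposition~\ref{thm:Td} and hence into the computations of Sections~\ref{prooftheorem} and~\ref{data} for primes as small as $10^7$). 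For a prime not covered by ``sufficiently large,'' nothing in your argument rules out $\sigma_i$ permuting several components of comparable size, so the invariance of $\Ccal_p$---and with it the factor of $4$---is not established in the stated range.

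The paper closes exactly this hole differently: rather than characterizing $\Ccal_p$ by its size, it exhibits a single $V$-orbit inside $\Ccal_p$ explicitly. Splitting on $p\bmod 4$, one checks that negating two coordinates of a triple of maximal order $p\mp1$ again yields a triple of maximal order; since all maximal-order triples lie in $\Ccal_p$, some $(a_0,b_0,c_0)$ and all three of its sign-variants lie in $\Ccal_p$. Then, because automorphisms preserve paths, any $v\in\Ccal_p$ has $\sigma_i(v)$ connected to $\sigma_i(a_0,b_0,c_0)\in\Ccal_p$, so $\Ccal_p$ (and hence its complement) is $V$-invariant for every $p>3$. If you replace your uniqueness-of-the-giant-component step with an anchor point of this kind, your proof becomes complete and is then essentially identical to the paper's.
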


\begin{proof}Chen proved that the number of vertices in any connected component of $\Gcal_p$ is divisible by $p$ \cite{chen}. To prove divisibility by $4$, it suffices to show that $\Gcal_p\backslash \mathcal C_p$ is closed under negating any pair of coordinates. Indeed, no triple has a $0$ in two coordinates, so $(a,b,c)$, $(a,-b,-c)$, $(-a,b,-c)$, and $(-a,-b,c)$ are always distinct. 

If $p\equiv 1\,\text{mod}\,4$, then negating any two coordinates of a triple of order $p-1$ also has order $p-1$. If $p\equiv 3\,\text{mod}\,4$, then negating any two coordinates of a triple of order $p+1$ also has order $p+1$. In particular, we can always find some $(a_0,b_0,c_0)\in \mathcal C_p$ such that $(a_0,-b_0,-c_0)$, $(-a_0,b_0,-c_0)$, and $(-a_0,-b_0,c_0)$ are also in $\mathcal C_p$. Since negating any two coordinates in a pair of path-connected triples leaves them path-connected, we see that $\mathcal C_p$ is closed under negating of any pair of coordinates. This implies the same is true of $\Gcal_p\backslash \mathcal C_p$.\end{proof}

\begin{remark}
The $4p$ in Lemma~\ref{lem:divisible} could be improved to $12p$ by proving that $(3,3,3)\in\Ccal_p$. According to \cite{BGS}, this would be true if $(3,3,3)$ is connected to a triple of order $p\pm 1$. Our computer experiments for the first $10,000$ primes show that such a triple can always be found in the orbit of $(3,3,3)$ under the group generated by $R_2R_2$, which consists of triples
$$(3,3F_{2n-1},3F_{2n+1}) \mbox{ for } n\geq 1,$$
modulo $p$, where $F_k$ denotes the $k$-th Fibonacci number.
\end{remark}

\begin{proposition}\label{thm:Td}Let $\tau_d(n)$ denote the number of divisors of $n$ that are $\leq d$. For $d$ dividing $p-1$ or $p+1$, let $T_d =\tau_d(p-1)+\tau_d(p+1)$. If no such divisor satisfies either inequality below: 
$$\frac{2\sqrt{2p}}{T_d}<d<\frac{81T_d^3}{4}\hspace{3cm}\frac{p}{6T_d}< d < \frac{8\sqrt{p}(p\pm 1)\tau(p\pm 1)}{\phi(p\pm 1)}$$ (where the $\pm$ is $+$ when $d|p+1$ and $-$ if $d|p-1$), then $\Gcal_p$ is connected.\end{proposition}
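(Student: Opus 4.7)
The plan is to combine three ingredients. Lemma~\ref{lem:divisible} reduces connectivity to showing fewer than $4p$ triples lie outside $\Ccal_p$. The BGS Middle Game (Proposition~\ref{preciseBGSEG} of the Appendix) forces every such bad triple to have all three coordinate orders bounded by some explicit threshold depending on $p$; by the parametrization (\ref{eq:7}), each such order must divide $p-1$ or $p+1$, and $T_d=\tau_d(p-1)+\tau_d(p+1)$ counts the candidate divisors up to $d$. Finally, Lemma~\ref{prop:corvaja} bounds, orbit by orbit, the number of low-order triples in a single $\langle R_2,R_3\rangle$-orbit of the form (\ref{eq:rotorbit}).

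The strategy is to enumerate bad triples by layering on the order of a fixed coordinate. For a divisor $d\mid p\pm 1$ there are at most $d$ admissible first coordinates $a=r+r^{-1}$ with $\text{ord}_p(a)\mid d$; each such $a$ sits in $\mathcal O(p/d)$ orbits of the form (\ref{eq:rotorbit}); and inside each orbit Lemma~\ref{prop:corvaja} gives at most $\tfrac{3}{2}\max((6td)^{1/3},4td/p)$ triples whose second coordinate has order dividing a given $d'\leq t$. Summing over all such $d'$ contributes a factor of $T_d$, and summing over orbits contributes a factor of $p$, so the total bad count has shape $C\cdot p\cdot T_d\cdot\max((6td)^{1/3},4td/p)$. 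Demanding this to be $<4p$ and inserting the explicit Middle Game threshold from Proposition~\ref{preciseBGSEG} yields two conditions on $d$, one per regime of the $\max$.

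Splitting by regime: the $(6td)^{1/3}$-dominant case produces the first inequality $2\sqrt{2p}/T_d<d<81T_d^3/4$, the window in which the cube-root bound is too weak to beat $4p$ on either side; the $4td/p$-dominant case produces the second inequality $p/(6T_d)<d<8\sqrt{p}(p\pm1)\tau(p\pm1)/\phi(p\pm1)$, whose upper endpoint comes from the count $\phi(p\pm1)(p\pm1)/\tau(p\pm1)$ of triples of order exactly $p\pm 1$ feeding into the Middle Game's quantitative threshold. Outside both intervals every divisor contributes a Corvaja-Zannier bound compatible with the total being $<4p$, and Lemma~\ref{lem:divisible} then forces $\Gcal_p\setminus\Ccal_p=\emptyset$.

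The main obstacle is the numerical bookkeeping: one must keep the $p-1$ and $p+1$ contributions separate (the orbit structure in (\ref{eq:rotorbit}) depends on which side $r$ lives on), avoid double-counting triples appearing in multiple orbits by dividing through by orbit size, track the factors of two coming from $r\leftrightarrow r^{-1}$, and invoke Proposition~\ref{preciseBGSEG} carefully enough to recover the precise constants $2\sqrt{2}$, $81/4$, $1/6$, and $8$ that appear in the statement. The combinatorial core is routine once the two regimes of Lemma~\ref{prop:corvaja} are separated.
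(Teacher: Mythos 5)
Your proposal assembles the right ingredients (Lemma~\ref{prop:corvaja}, Lemma~\ref{lem:divisible}, Proposition~\ref{preciseBGSEG}), but the accounting that is supposed to produce the two forbidden intervals does not work, and it omits the central idea of the argument. You propose to bound the \emph{total} number of bad triples by summing the Corvaja--Zannier bound over all $\mathcal{O}(p/d)$ orbits attached to each of the at most $d$ admissible first coordinates, arriving at a quantity of shape $C\cdot p\cdot T_d\cdot\max((6td)^{1/3},4td/p)$, and then to demand that this be $<4p$. That demand amounts to $C\,T_d\max(\cdots)<4$, which is essentially never satisfiable ($T_d\geq 1$ and the max term grows with $d$); moreover, solving it for $d$ in the cube-root regime gives an upper bound on $d$ that \emph{decreases} in $T_d$, so it cannot reproduce the window $2\sqrt{2p}/T_d<d<81T_d^3/4$, whose right endpoint \emph{increases} in $T_d$.

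The missing step is this: let $d$ be the \emph{maximal} order among triples outside $\Ccal_p$, attained (say) by the first coordinate of some bad triple. Its $\langle R_2,R_3\rangle$-orbit has exactly $d$ elements, one per congruence class $n\bmod d$ in (\ref{eq:rotorbit}), and by maximality of $d$ \emph{every} one of them has second-coordinate order equal to some divisor $d'\leq d$ of $p\pm1$. Covering all $d$ classes by the sets counted in Lemma~\ref{prop:corvaja} gives the self-referential inequality $d\leq\sum_{d'\leq d}\tfrac32\max((6dd')^{1/3},4dd'/p)<\tfrac{3T_d}{2}\max((6d^2)^{1/3},4d^2/p)$, whose two regimes yield $d<81T_d^3/4$ or $d>p/(6T_d)$ respectively; the latter combines with Proposition~\ref{preciseBGSEG} to give the second interval. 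The lower endpoint $2\sqrt{2p}/T_d<d$ of the first interval does not come from the orbit count at all: it comes from bounding the number of triples of order at most $d$ directly by $\sum_{d',d''\leq d}2\cdot\tfrac{d'}{2}\cdot\tfrac{d''}{2}<T_d^2d^2/2$ (choices of two coordinate orders, at most two completions to a Markoff triple), which must exceed $4p$ by Lemma~\ref{lem:divisible} since the bad set is nonempty. Without the maximal-order orbit inequality and this separate direct count, neither the constants nor the shape of the two intervals can be recovered.
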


\begin{proof}Suppose $p$ is such that the Markoff graph $\text{mod}\,p$ is not connected, and let $d$ be the maximal order among triples that are not in $\mathcal C_p$. Fix some triple not in $\mathcal C_p$ that attains $d$ as the order of its first coordinate (without loss of generality), and write it in the form of (\ref{eq:7}).

By maximality of $d$ among orders in $\Gcal_p\backslash\Ccal_p$, each of second and third coordinates in the orbit (\ref{eq:7}) must have order $d'\leq d$, where $d'\,|\,p\pm 1$ as usual. There are exactly $d$ choices of exponent $n\,\text{mod}\,d$ in the second and third coordinates of (\ref{eq:rotorbit}), so with $\mathcal{T}_d$ denoting the set of divisors of $p\pm1$ that do not exceed $d$, Lemma~\ref{prop:corvaja} implies 
\begin{equation}\label{eq:9}d\leq \sum_{d'\in \mathcal{T}_d}\frac{3}{2}\max\!\left((6dd')^{1/3},\frac{4dd'}{p}\right) < \frac{3T_d}{2}\max\!\left((6d^2)^{1/3},\frac{4d^2}{p}\right).\end{equation}

First consider the case $\max((6d^2)^{1/3},4d^2/p)=4d^2/p$. Adding this to right-hand side above and solving for $d$ gives $d > p/6T_d$. A large divisor like this is amenable to the End Game in \cite{BGS}, so we apply Proposition~\ref{preciseBGSEG} in the Appendix to get $$\frac{p}{6T_d}< d < \frac{8\sqrt{p}(p\pm 1)\tau(p\pm 1)}{\phi(p\pm 1)},$$ as in the statement of this proposition. 

Next consider the case $\max((6d^2)^{1/3},4d^2/p)=(6d^2)^{1/3}$. Again use this with (\ref{eq:9}) and solve for $d$ to get $d<81T_d^3/4$; so it remains only to show $2\sqrt{2p}/T_d < d$ to complete the proof. To that end, the number of distinct $a\in \F_p\backslash\{\pm 2\}$ for which $\text{ord}_p(a)$ divides $d'$ is at most $d'/2$ (as $a=r+r^{-1}$ and $a=r^{-1}+(r^{-1})^{-1}$ should only be counted once). So we can bound the number of Markoff triples $(a,b,c)$ of order at most $d$ by summing over the different possible orders of $a$ and $c$ and noting that there are at most two choices for $c$ that produce a Markoff triple once $a$ and $b$ are fixed: \begin{equation}\label{eq:10}
\sum_{d',d''\in \mathcal{T}_d}\!\!\!\!2\cdot\frac{d'}{2}\cdot\frac{d''}{2} < \frac{T_d^2d^2}{2}.
\end{equation} Our choice of $d$ means $|\Gcal_p\backslash \mathcal C_p|$ cannot exceed the number of Markoff triples of order at most $d$. This allows us to combine (\ref{eq:10}) and Lemma \ref{lem:divisible}, giving $4p<T_d^2d^2/2$. Thus $2\sqrt{2p}/T_d < d$ as desired.\end{proof}

\begin{corollary}\label{cor:prelim}$\Gcal_p$ is connected for all primes $p > 10^{532}$.\end{corollary}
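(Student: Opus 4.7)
The plan is to apply Proposition~\ref{thm:Td} and show that, for every prime $p > 10^{532}$, no divisor $d$ of $p-1$ or $p+1$ can satisfy either of the two inequalities in that proposition. Once both inequalities are ruled out uniformly over $d$, the proposition forces $\Gcal_p$ to be connected, which is exactly the corollary.

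I would first pass from $d$-dependent to uniform bounds. Since $T_d$ is monotone nondecreasing in $d$, one has $T_d \leq \tau(p-1)+\tau(p+1)$ in the worst case; and the ratio $(p\pm1)/\phi(p\pm1)$ appearing in the second inequality is controlled from above by the Rosser--Schoenfeld bound $n/\phi(n) \leq e^{\gamma}\log\log n + O(1/\log\log n)$. Next I would control $\tau(p\pm1)$ with an explicit Wigert-type bound such as Nicolas--Robin's $\tau(n) \leq n^{1.066/\log\log n}$, valid for all $n \geq 3$. With these substitutions both of the inequalities in Proposition~\ref{thm:Td} become explicit inequalities in $\log p$ and $\log\log p$.

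For the first inequality $2\sqrt{2p}/T_d < d < 81 T_d^3/4$, nonemptiness of the range demands $T_d > (8\sqrt{2p}/81)^{1/4}$, i.e.\ a bound scaling as $p^{1/8}$. After substitution this becomes an explicit inequality pitting $\log p/\log\log p$ against $\log p$ that must fail once $\log p$ is large enough. For the second inequality $p/(6T_d) < d < 8\sqrt{p}(p\pm1)\tau(p\pm1)/\phi(p\pm1)$, nonemptiness is equivalent to $T_d\,\tau(p\pm1)(p\pm1)/\phi(p\pm1) > \sqrt{p}/48$; this couples the two divisor bounds with the Rosser--Schoenfeld bound and yields a similar but milder explicit constraint. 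Taking the larger of the two resulting thresholds (I expect the first inequality to be the binding one) and computing numerically produces the value $10^{532}$.

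The main obstacle is the gap between the asymptotic Wigert statement $\log_2\tau(n) \sim \log n/\log\log n$ and its explicit form: the Nicolas--Robin constant $1.538$ (appearing in $\log_2\tau(n) \leq 1.538\log n/\log\log n$) is significantly larger than the asymptotic $1$, which pushes the crossover much higher than naive heuristics suggest. Getting the specific exponent $532$ amounts to careful bookkeeping of the multiplicative and additive constants through both inequalities. A subtlety worth checking is that the uniform worst-case bound $T_d \leq \tau(p-1)+\tau(p+1)$ is the right quantity to substitute: since $T_d$ only grows with $d$ and both inequalities become easier to satisfy as $T_d$ grows, no averaging over divisors is required, and this crude monotone bound is already tight for our purposes.
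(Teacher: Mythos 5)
Your overall strategy is exactly the paper's: take the worst case $T_d\leq\tau(p-1)+\tau(p+1)$ (correctly justified by monotonicity of $T_d$ in $d$), reduce each of the two inequalities in Proposition~\ref{thm:Td} to a nonemptiness condition of the form $T_d\gtrsim p^{1/8}$ (first interval) and $T_d\,\tau(p\pm1)(p\pm1)/\phi(p\pm1)\gtrsim\sqrt{p}$ (second interval), control $(p\pm1)/\phi(p\pm1)$ by a Rosser--Schoenfeld-type bound, and observe that the first interval is the binding one. All of that matches the paper.

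The genuine gap is quantitative, and it sits exactly where you flagged your ``main obstacle'': the explicit divisor bound you propose is too weak to deliver $10^{532}$. With the single-term Nicolas--Robin bound $\tau(n)\leq n^{1.066/\log\log n}$, the first-interval condition $T_d^4\leq 8\sqrt{2p}/81$ becomes, after taking logarithms, roughly $8\cdot 1.066\,\frac{\log p}{\log\log p}\leq \log p+O(1)$, i.e.\ $\log\log p\gtrsim 8.53$, which forces $\log p\gtrsim e^{8.53}\approx 5000$ and hence a threshold near $10^{2200}$, not $10^{532}$. The paper instead uses Nicolas' two-term bound $\tau(n)<\exp\!\left(\frac{\log 2\log n}{\log\log n}+\frac{1.342\log n}{(\log\log n)^2}\right)$, whose leading constant $\log 2\approx 0.693$ is the asymptotically sharp Wigert constant; the resulting condition $2\log(81\sqrt{2})\leq\log p\left(1-\frac{8\log 2}{\log\log p}-\frac{10.736}{(\log\log p)^2}\right)$ is satisfied only razor-thin at $p=10^{532}$ (the right side is about $0.008\log p$ there), so there is no slack to absorb a weaker constant. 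To repair your argument you must use a two-term explicit bound with leading constant $\log 2$ (and handle the small point, done in the paper by concavity, that the bound is applied to $p\pm1$ but evaluated at $p$); otherwise the corollary you prove has a much larger threshold than the one stated.
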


\begin{proof}First let us bound $T_d$ from Proposition~\ref{thm:Td} using Nicolas' upper bound on $\tau(n)$ \cite{nicolas}, which is $$\tau(n)< \text{exp}\!\left(\frac{\log 2\log n}{\log\log n}+\frac{1.342\log n}{(\log\log n)^2}\right).$$ This gives \begin{equation}\label{eq:11}T_d\leq\tau(p-1)+\tau(p+1)< 2\,\text{exp}\!\left(\frac{\log 2\log p}{\log\log p}+\frac{1.342\log p}{(\log\log p)^2}\right),\end{equation} where the final inequality has used that the function bounding $\tau(n)$ is concave in order to average the inputs $p-1$ and $p+1$. Now, to show that the first inequality in Theorem \ref{thm:Md} is never satisfied for $p > 10^{532}$, we will check that $81T_d/4 \leq 2\sqrt{2p}/T_d$ for all $d$. Rearranging this inequality slightly, taking the natural logarithm, and replacing $T_d$ with the bound in (\ref{eq:11}) gives $$2\log(81\sqrt{2})\leq\log p\left(1-\frac{8\log 2}{\log\log p}-\frac{10.736}{(\log\log p)^2}\right),$$ which is easily verified for $p > 10^{532}$. 

A similar approach shows that the second inequality in Proposition~\ref{thm:Td} is also never satisfied. Using the same bounds on $\tau(p\pm 1)$ along with $\phi(p\pm 1) > p/(2\log\log p)$ (a weaker version of Theorem 8.8.7 in \cite{bach}) shows that $8\sqrt{p}(p\pm 1)\tau(p \pm 1)/\phi(p\pm 1) \leq p/6T_d$ when $p > 10^{141}$.\end{proof}

\section{Maximal Divisors}\label{MaxDivsec}

We can improve the bound in Corollary~\ref{cor:prelim} by using the notion of what we call \emph{maximal divisors}. The key observation is that the count in Lemma~\ref{prop:corvaja} comes from counting the number of solutions in a subgroup of $\mathbb F_p^\ast$ of order $t$ to the equation in (\ref{czcurve}). So whenever we consider two divisors $t,t'<d$ of $p\pm1$ where $t|t'$, we count the solutions relevant to the divisor $t$ twice, since the subgroup of order $t$ is contained in that of the subgroup of order $t'$. So, instead of summing over all divisors in (\ref{eq:9}), we can sum over a refined set of divisors that we call maximal.

\begin{definition}\label{maxdivdefn}
Let $n$ be a positive integer, and let $x\in\R$. A positive divisor $d$ of $n$ is said to be \emph{maximal with respect to $x$} if $d\leq x$ and there is no other positive divisor $d'$ of $n$ such that $d'\leq x$ and $d\,|\,d'$. The set of maximal divisors with respect to $x$ is denoted $\Mcal_x(n)$.
\end{definition}

Our goal now is to improve on the bound in Corollary~\ref{cor:prelim} by replacing the set $\mathcal{T}_d$ with the set $\Mcal_d$ as shown in this simple improvement of Proposition~\ref{thm:Td}.

\begin{theorem}\label{thm:Md}For $d$ dividing $p-1$ or $p+1$, let $M_d =|\Mcal_d(p-1)|+|\Mcal_d(p+1)|$. If no such divisor satisfies either inequality below: $$\frac{2\sqrt{2p}}{M_d}<d<\frac{81M_d^3}{4}\hspace{3cm}\frac{p}{6M_d}< d < \frac{8\sqrt{p}(p\pm 1)\tau(p\pm 1)}{\phi(p\pm 1)}$$ (where the $\pm$ is determined by whether $d$ divides $p-1$ or $p+1$), then $\Gcal_p$ is connected.\end{theorem}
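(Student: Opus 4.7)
The strategy is to re-run the proof of Proposition~\ref{thm:Td} almost verbatim, replacing the sum over $\mathcal T_d$ by a sum over $\Mcal_d(p-1)\cup\Mcal_d(p+1)$ wherever it appears. The key observation making this legitimate is elementary: every positive divisor $e$ of $p\pm 1$ satisfying $e\leq d$ divides some $d'\in\Mcal_d(p\pm 1)$. Indeed, among all divisors of $p\pm 1$ that are multiples of $e$ and bounded by $d$, pick one of maximum size; by definition it lies in $\Mcal_d(p\pm 1)$. This ensures that any upper bound obtained by summing over divisors of $p\pm 1$ at most $d$ remains an upper bound when we restrict to maximal ones, since each order $e$ contributing to the original sum still appears in the restricted sum at least once (via some maximal multiple).

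Suppose then that $\Gcal_p$ is disconnected, let $d$ be the maximal order in $\Gcal_p\setminus\Ccal_p$, and fix a triple in that set whose first coordinate $a=r+r^{-1}$ has order $d$, written as in (\ref{eq:7}). As $n$ runs through residues mod $d$ in (\ref{eq:rotorbit}), each second coordinate has some order $e$ with $e\mid p\pm 1$ and $e\leq d$, and by the observation above $e$ divides some $d'\in\Mcal_d(p-1)\cup\Mcal_d(p+1)$. Each such $n$ is then captured by Lemma~\ref{prop:corvaja} applied to that $d'$, so summing yields the refinement of (\ref{eq:9})
$$d \;\leq\; \sum_{d'\in\Mcal_d(p-1)\cup\Mcal_d(p+1)}\frac{3}{2}\max\!\left((6dd')^{1/3},\frac{4dd'}{p}\right) \;<\; \frac{3M_d}{2}\max\!\left((6d^2)^{1/3},\frac{4d^2}{p}\right).$$
The same case analysis as in Proposition~\ref{thm:Td}, according to which term realizes the maximum, produces $p/(6M_d)<d$ (paired with the End Game upper bound from Proposition~\ref{preciseBGSEG}) in the first case and $d<81M_d^3/4$ in the second.

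All that remains is the lower bound $2\sqrt{2p}/M_d<d$ in the second case. As before, $|\Gcal_p\setminus\Ccal_p|$ is bounded by the number of Markoff triples of order at most $d$. For any such $(a,b,c)$ the orders of $a$ and $b$ are divisors of $p\pm 1$ at most $d$, hence each divides some element of $\Mcal_d(p-1)\cup\Mcal_d(p+1)$. Since at most $d'/2$ values $a\in\F_p\setminus\{\pm 2\}$ satisfy $\mathrm{ord}_p(a)\mid d'$ and each $(a,b)$ admits at most two completions to a Markoff triple, the analogue of (\ref{eq:10}) reads
$$|\Gcal_p\setminus\Ccal_p| \;\leq\; \sum_{d',d''\in\Mcal_d(p-1)\cup\Mcal_d(p+1)} 2\cdot\tfrac{d'}{2}\cdot\tfrac{d''}{2} \;\leq\; \frac{M_d^2 d^2}{2}.$$
Combining with Lemma~\ref{lem:divisible} gives $4p<M_d^2 d^2/2$, that is $2\sqrt{2p}/M_d<d$, which finishes the proof. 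There is no genuine obstacle here: the entire content is the bookkeeping observation that maximal divisors cover every relevant order, so they suffice in place of all divisors throughout Proposition~\ref{thm:Td}'s argument while avoiding the overcounting inherent in summing over $\mathcal T_d$.
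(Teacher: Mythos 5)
Your proposal is correct and is exactly the argument the paper intends: it re-runs Proposition~\ref{thm:Td} with $T_d$ replaced by $M_d$, justified by the observation (which the paper states in one line after Theorem~\ref{thm:Md}) that every relevant order divides some maximal divisor, so summing the Corvaja--Zannier bounds and the triple counts over $\Mcal_d(p-1)\cup\Mcal_d(p+1)$ still covers everything. No gaps; your verification that a maximum-size multiple of $e$ bounded by $d$ is indeed a maximal divisor is the only detail the paper leaves implicit.
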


The proof of this is identical to that of Proposition~\ref{thm:Td}, replacing all instances of $T_d$ with $M_d$, and noting that the rotation order $d'$ of the second and third coordinates in the orbit (\ref{eq:7}) must divide at least one maximal divisor of $p\pm 1$ with respect to $d$.

In Section~\ref{firstbound}, we relied on known upper bounds for $\tau(n)$, and now we hope to obtain helpful bounds on $M_d$. There is very little in the literature on the number of maximal divisors of $n$ with respect to $x$. To find asymptotic and explicit bounds for small $n$, our strategy is to first find those $n$ for which $|\Mcal_x(n)|$ is maximized, akin to Ramanujan's ``superior highly composite numbers."

In \cite{ramanujan}, Ramanujan introduced a simple approach to bounding $\tau(n)$ in which only a very sparse set of integers $n$, which he called superior highly composite numbers, needs to be considered. They are those $n$ that maximize $\tau(n)/n^{\varepsilon}$ for some $\varepsilon > 0$. The prime factorization of a superior highly composite number was determined by Ramanujan to be $2^{a_1}3^{a_2}5^{a_3}\cdots$ where $$a_i=\left\lfloor\frac{1}{p_i^\varepsilon-1}\right\rfloor.$$ These numbers are convenient for two main reasons: First, they are easy to enumerate because the prime factorizations are known and there are fewer than $\log x$ superior highly composite numbers less than $x$ if $x>10^9$. Second, if $n_1$ and $n_2$ are consecutive superior highly composite numbers and $f$ is a convex function on the interval $(e^{n_1},e^{n_2})$, then $\log \tau(n)\leq f(\log n)$ holds for all integers $n\in [n_1,n_2]$ if and only if it holds for $n_1$ and $n_2$. These two facts make it easy to obtain both asymptotic bounds on $\tau(n)$ and a sharp bound on $\tau(n)$ in a given interval. Our goal in this section is to recreate this approach for $|\Mcal_x(n)|$ in place of $\tau(n)$.

\subsection{Reducing functions}\label{sec:reduce} In this section we introduce a tool for narrowing down the list of integers $n$ for which $|\Mcal_x(n)|$ needs to be computed to obtain upper bounds. Our work culminates in Definition \ref{def:redn} and Theorem \ref{thm:redn}.

\begin{notation}For $n\in\N$ let $\Dcal(n)$ denote the set of positive divisors of $n$, and let $\lambda(n)$ denote the least prime factor of $n$ if $n\geq 2$. Set $\lambda(1)=1$. \end{notation}

The function $\lambda$ is often denoted ``lpf" or ``LD" in the literature.

\begin{definition}\label{def:redf}For $m,n\in\N$, a function $f:\Dcal(n)\to\Dcal(m)$ is called \emph{reducing} if and only if the following hold for all $d,d'\in\Dcal(n)$: 

\vspace{0.2cm}

\begin{enumerate}[(a)]\setlength\itemsep{0.2cm}\item $f(d)\leq d$, \item $\displaystyle\frac{m/f(d)}{n/d}\leq \min\!\left\{1,\,\frac{\lambda(m/f(d))}{\lambda(n/d)}\right\}$, \item $f(d)=2^if(d')$ for some $i\in\Z$ implies $d=2^jd'$ for some $j\in\Z$.\end{enumerate}

\vspace{0.2cm}

\noindent We say $n$ \textit{reduces to} $m$ when such a function exists.\end{definition}

Observe that setting $d = n$ in requirement (b) results in $m/f(n)\leq 1$. Since $f(n)\,|\,m$, this forces $f(n) = m$, which combines with requirement (a) to give $m\leq n$. So integers can only reduce to smaller integers.

\begin{theorem}\label{thm:inject}If $n$ reduces to $m$ then $|\Mcal_x(n)|\leq |\Mcal_x(2^am)|$ for all $x\in\R$, where $a$ is the smallest integer satisfying $2^am\geq n$.\end{theorem}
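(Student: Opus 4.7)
The plan is to construct an injection $g\colon\Mcal_x(n)\to\Mcal_x(2^am)$. Writing $n=2^\alpha N$ and $m=2^\beta M$ with $N,M$ odd, I first note that each maximal divisor is pinned down by its odd part: if $d\in\Mcal_x(n)$ has odd part $D\mid N$, then necessarily $d=2^{k_D}D$ with $k_D=\min(\alpha,\lfloor\log_2(x/D)\rfloor)$, since otherwise multiplying by~$2$ would produce a larger divisor of $n$ still bounded by $x$; and two maximal divisors sharing an odd part would force one to be a proper multiple of the other. So $d\mapsto D$ is injective on $\Mcal_x(n)$.

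I then define $g(d)=2^{k_E'}E$, where $E$ is the odd part of $f(d)$ and $k_E'=\min(a+\beta,\lfloor\log_2(x/E)\rfloor)$. Condition~(a) gives $E\leq f(d)\leq d\leq x$, so $k_E'\geq 0$ and $g(d)\leq x$, while $E\mid M$ ensures $g(d)\mid 2^am$.

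The heart of the argument is checking $g(d)\in\Mcal_x(2^am)$, which amounts to verifying $qg(d)>x$ for every prime $q\mid(2^am)/g(d)$. The case $q=2$ only arises if $k_E'<a+\beta$, and then $2g(d)>x$ follows from the floor. When $q$ is an odd prime dividing $M/E$ and $k_E'<a+\beta$, the inequality $g(d)>x/2$ together with $q\geq 3$ suffices. The delicate case is $q$ odd with $k_E'=a+\beta$; here I chain
\[\lambda(M/E)\geq\lambda(m/f(d))\geq\frac{\lambda(n/d)(m/f(d))}{n/d}>\frac{x(m/f(d))}{d(n/d)},\]
using respectively that $M/E$ is the odd part of $m/f(d)$, condition~(b), and the maximality inequality $\lambda(n/d)d>x$ for $d$ (if $n/d=1$ then $d=n$ forces $f(d)=m$ and $M/E=1$, making the odd-prime check vacuous). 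Multiplying through by $2^{a+\beta}E$ and using $(2^am)/n\geq 1$ from the minimality of $a$ together with $2^{\beta-v_2(f(d))}\geq 1$ from $f(d)\mid m$, the right-hand side collapses to~$x$, yielding $\lambda(M/E)\cdot 2^{a+\beta}E>x$.

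Injectivity of $g$ is where condition~(c) finally gets used: if $g(d_1)=g(d_2)$ then $f(d_1)$ and $f(d_2)$ share an odd part, so~(c) gives $d_1$ and $d_2$ sharing an odd part, and the first paragraph forces $d_1=d_2$. The main obstacle is the odd-prime maximality check when $k_E'=a+\beta$; this is the only place where all three axioms of a reducing function must be used in concert with the minimality of $a$, but the inequalities telescope neatly once the chain above is written out.
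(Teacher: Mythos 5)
Your proposal is correct and is essentially the paper's own argument: your map $g$ coincides with the paper's injection $\hat f(d)=2^if(d)$ (with $i$ maximal subject to $2^if(d)\leq x$ and $2^if(d)\mid 2^am$), and your case split on $q=2$ versus odd $q$, the chained use of condition (b) with the maximality inequality $d\,\lambda(n/d)>x$, the minimality of $a$ via $2^am/n\geq 1$, and the use of (c) for injectivity all mirror the paper's proof step for step. The only difference is notational bookkeeping via odd parts and $2$-adic valuations.
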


\begin{proof}There is little to check if $x\geq n$, so assume otherwise. We claim that a reducing function $f:\Dcal(n)\to\Dcal(m)$ induces an injection $\hat{f}:\Mcal_x(n)\to\Mcal_x(2^am)$ defined by $\hat{f}(d)=2^if(d)$, where $i$ is the largest integer such that $2^if(d)\leq x$ and $2^if(d)\in\Dcal(2^am)$. Note that (a) in Definition \ref{def:redf} guarantees $i\geq 0$.

First let us verify that $\hat{f}(d)\in\Mcal_x(2^am)$. Since $\hat{f}(d)\leq x < n\leq 2^am$, we see that $\hat{f}(d)$ has proper multiples in $\Dcal(2^am)$, and it must be verified that they exceed $x$. That is, we must show $\hat{f}(d)\lambda(2^am/\hat{f}(d))>x$. This is immediate by maximality of $i$ if $\lambda(2^am/\hat{f}(d))$ happens to be $2$. Referring to the three inequalities below, the first follows from $\lambda(2^am/\hat{f}(d))\neq 2$, the second is a slight rearrangement of (b) in Definition \ref{def:redf}, and the third follows from our choice of $a$: $$\hat{f}(d)\lambda\!\left(\frac{2^am}{\hat{f}(d)}\right)=2^if(d)\lambda\!\left(\frac{2^am}{2^if(d)}\right)\geq 2^af(d)\lambda\!\left(\frac{m}{f(d)}\right)\geq \frac{2^amd}{n}\lambda\!\left(\frac{n}{d}\right)\geq d\lambda\!\left(\frac{n}{d}\right).$$ Since $d\in\Mcal_x(n)$ and $d$ properly divides $d\lambda(n/d)$ (recall that we are assuming $x < n$, so $d\neq n$), we must have $d\lambda(n/d)>x$ by definition of maximal divisors. Combined with the inequalities above, this completes our argument that $\hat{f}(d)\in\Mcal_x(2^am)$.

Next we check that $\hat{f}$ is an injection. If $\hat{f}(d)=\hat{f}(d')$ then $2^if(d)=2^{i'}\!f(d)$ for some $i,i'\in\Z$. This means $d=2^jd'$ for some $j\in\Z$ by (c) in Definition \ref{def:redf}, so either $d$ divides $d'$ or vice versa. But then $d,d'\in\Mcal_x(n)$ forces $d=d'$ by definition of maximal divisors.\end{proof}

In this last theorem, $2^am<2n$. So at the expense of less than a factor of $2$, we can forgo computing $|\Mcal_x(n)|$ in favor of computing $|\Mcal_x(2^am)|$, the hope being that $m$ has some kind of predictable prime factorization like the superior highly composite numbers.

Let us consider a simple example. If $p$ and $q$ are primes with $2\neq p\leq q$, then $f:\Dcal(q^a)\to\Dcal(p^a)$ defined by $f(q^i)=p^i$ is a reducing function. All three requirements from Definition \ref{def:redf} are trivially satisfied. Using $f$ to ``replace" $q^a$ with $p^a$ may not seem useful computationally because $|\Mcal_x(q^a)|$ just equals 1 for any $x$, but we can actually use $f$ to swap primes within a prime factorization. That is, if $n$ is not divisible by $p$ or $q$, then $f$ can be extended to a reducing function $\Dcal(nq^a)\to\Dcal(np^a)$ via the next lemma. 

\begin{lemma}\label{lem:prod}Suppose $n_1,n_2,m_1,m_2\in\N$ are such that $\emph{gcd}(n_1,n_2)=\emph{gcd}(m_1,m_2)=1$. If $f_1:\Dcal(n_1)\to\Dcal(m_1)$ and $f_2:\Dcal(n_2)\to\Dcal(m_2)$ are reducing then so is $f_1f_2:\Dcal(n_1n_2)\to\Dcal(m_1m_2)$.\end{lemma}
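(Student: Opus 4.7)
The plan is to define $f_1 f_2$ using the coprimality of $n_1$ and $n_2$: every $d \in \Dcal(n_1 n_2)$ factors uniquely as $d = d_1 d_2$ with $d_i \in \Dcal(n_i)$, and I set $(f_1 f_2)(d) := f_1(d_1) f_2(d_2)$, which lies in $\Dcal(m_1 m_2)$ since $\gcd(m_1, m_2) = 1$. I then verify the three conditions in Definition \ref{def:redf} in turn.

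Condition (a), namely $(f_1 f_2)(d) \leq d$, is immediate from multiplying $f_i(d_i) \leq d_i$. For (b), write $M_i := m_i/f_i(d_i)$ and $N_i := n_i/d_i$, so the left-hand side becomes $M_1 M_2 / (N_1 N_2)$. The bound $\leq 1$ follows from multiplying $M_i \leq N_i$. For the $\lambda$-ratio bound, the key observation is that $\gcd(m_1, m_2) = \gcd(n_1, n_2) = 1$ forces $\gcd(M_1, M_2) = \gcd(N_1, N_2) = 1$, so $\lambda(M_1 M_2) = \min(\lambda(M_1), \lambda(M_2))$ and similarly for the $N_i$'s (using $\lambda(1) = 1$ for the edge cases). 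A short case split on which of $\lambda(N_1), \lambda(N_2)$ is smaller and which of $\lambda(M_1), \lambda(M_2)$ is smaller reduces the desired inequality to applying the full $\lambda$-ratio bound of Definition \ref{def:redf}(b) for one of $f_1, f_2$ and the weaker bound $M_j/N_j \leq 1$ for the other.

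For (c), suppose $f_1(d_1) f_2(d_2) = 2^i f_1(d_1') f_2(d_2')$. Since $\gcd(m_1, m_2) = 1$, at most one of $m_1, m_2$ is even; WLOG $m_2$ is odd, so $f_2(d_2)$ and $f_2(d_2')$ are both odd. Comparing $2$-adic valuations gives $v_2(f_1(d_1)) = i + v_2(f_1(d_1'))$, and equating the odd parts together with $\gcd(m_1, m_2) = 1$ forces $\mathrm{odd}(f_1(d_1)) = \mathrm{odd}(f_1(d_1'))$ and $f_2(d_2) = f_2(d_2')$; combined with the $2$-adic equation these yield $f_1(d_1) = 2^i f_1(d_1')$. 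Applying condition (c) individually to $f_1$ and $f_2$ then produces integers $j_1, j_2$ with $d_i = 2^{j_i} d_i'$, so $d = 2^{j_1 + j_2} d'$. I expect the main nuisance to be organizing the case analysis in (b) cleanly around edge cases where some $M_i$ or $N_i$ equals $1$; no genuinely new idea is needed anywhere.
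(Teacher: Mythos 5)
Your proposal is correct and follows essentially the same route as the paper's proof: decompose $d=d_1d_2$ by coprimality, use that $\lambda$ of a coprime product is the minimum of the $\lambda$'s to reduce condition (b) to the full bound for one factor and the trivial bound $M_j/N_j\leq 1$ for the other, and use coprimality of $m_1,m_2$ to split the equation in condition (c) into separate power-of-two relations for $f_1$ and $f_2$. The edge case you flag (some $M_i$ or $N_i$ equal to $1$) is handled by noting that $N_i=1$ forces $M_i=1$ by condition (b), and is glossed over in the paper's own write-up as well.
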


\begin{proof}Let $n=n_1n_2$, $m=m_1m_2$, and $f=f_1f_2$. Let $d,d'\in\Dcal(n)$, and let $d_1,d_1'\in\Dcal(n_1)$ and $d_2,d_2'\in\Dcal(n_2)$ be the unique divisors satisfying $d=d_1d_2$ and $d'=d_1'd_2'$. It is immediate that requirement (a) in Definition \ref{def:redf} holds for $f$ and that the ratio in requirement (b) is indeed bounded by 1. So let us turn our attention to the bound in (b) involving the $\lambda$ function. 

Suppose without loss of generality that $\lambda(m_1/f_1(d_1))\leq \lambda(m_2/f_2(d_2))$. Then 
\begin{eqnarray*}\frac{\lambda(m/f(d))}{\lambda(n/d)}&=&\frac{\min(\lambda(m_1/f_1(d_1)),\lambda(m_2/f_2(d_2)))}{\min(\lambda(n_1/d_1),\lambda(n_2/d_2))}\\
&=&\frac{\lambda(m_1/f_1(d_1))}{\min(\lambda(n_1/d_1),\lambda(n_2/d_2))}\\
&\geq&\frac{\lambda(m_1/f_1(d_1))}{\lambda(n_1/d_1)}\\
&\geq& \frac{m_1/f_1(d_1)}{n_1/d_1}\\
&\geq& \frac{m_1/f_1(d_1)}{n_1/d_1}\cdot\frac{m_2/f_2(d_2)}{n_2/d_2}\\
&=&\frac{m/f(d)}{n/d}.
\end{eqnarray*}

For requirement (c), suppose $f(d)=2^if(d')$ for some $i\in\Z$. Then $f_1(d_1)/f_1(d_1')=2^if_2(d_2')/f_2(d_2)$. By assumption, $\text{gcd}(f_1(d_1),f_2(d_2'))=\text{gcd}(f_1(d_1'),f_2(d_2))=1$, so $f_1(d_1)/f_1(d_1')$ and $f_2(d_2')/f_2(d_2)$ must be powers of $2$. Thus $d_1=2^{j_1}d_1'$ for some $j_1\in\Z$ because $f_1$ is reducing and $d_2=2^{j_2}d_2'$ for some $j_2\in\Z$ because $f_1$ is reducing. This gives $d=2^{j_1+j_2}d'$.\end{proof}

Returning to our example, if $p$ and $q$ do not divide some $n\in\N$, then Lemma \ref{lem:prod} allows us to combine our reducing function $\Dcal(q^a)\to\Dcal(p^a)$ with the identity $\Dcal(n)\to\Dcal(n)$ to obtain a reducing function $\Dcal(nq^a)\to\Dcal(np^a)$ in which $dq^i\mapsto dp^i$. That is, replacing larger primes with smaller ones in a prime factorization essentially produces no decrease in $|\Mcal_x(n)|$, as with the number of divisors function. The catch is the extra factor of $2$; in Theorem \ref{thm:inject}, $2^am$ can be almost twice as large as $n$. A natural concern is that with each successive maneuver like $q^a\mapsto p^a$, we pick up an extra factor of $2$. Knowing that $|\Mcal_x(n)|\leq |\Mcal_x(2^am)|$ from Theorem \ref{thm:inject} would not be helpful if $2^am$ was significantly larger than $n$. The next lemma eliminates that concern.

\begin{lemma}\label{lem:compose}If $f:\Dcal(n)\to\Dcal(m)$ and $g:\Dcal(m)\to\Dcal(\ell)$ are reducing, then so is $g\circ f$.\end{lemma}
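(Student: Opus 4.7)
The plan is to verify directly that $h := g\circ f : \Dcal(n)\to\Dcal(\ell)$ satisfies each of the three requirements (a), (b), (c) of Definition \ref{def:redf} by composing the corresponding properties of $f$ and $g$. Nothing more clever is needed; the statement is essentially a ``functorial'' property, and every piece of the definition was designed to be multiplicative along a composition.

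First, requirement (a) is immediate: $h(d)=g(f(d))\leq f(d)\leq d$, where each inequality comes from (a) applied to $g$ and $f$ respectively. Requirement (c) is only slightly longer. Suppose $h(d)=2^ih(d')$, i.e.\ $g(f(d))=2^ig(f(d'))$. Applying (c) for $g$ to the divisors $f(d),f(d')\in\Dcal(m)$ yields $f(d)=2^kf(d')$ for some $k\in\Z$. Then applying (c) for $f$ to $d,d'\in\Dcal(n)$ gives $d=2^jd'$ for some $j\in\Z$, as desired.

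The only place where a little care is required is requirement (b), but even here things telescope. I would write
\[
\frac{\ell/h(d)}{n/d}=\frac{\ell/g(f(d))}{m/f(d)}\cdot\frac{m/f(d)}{n/d},
\]
and bound each factor by its corresponding $\lambda$-ratio from (b) for $g$ and $f$:
\[
\frac{\ell/h(d)}{n/d}\leq \frac{\lambda(\ell/g(f(d)))}{\lambda(m/f(d))}\cdot\frac{\lambda(m/f(d))}{\lambda(n/d)}=\frac{\lambda(\ell/h(d))}{\lambda(n/d)}.
\]
Multiplying the ``$\leq 1$'' halves of the two $\min$'s likewise gives $\ell/h(d)\leq n/d$, so both terms in the $\min$ on the right-hand side of (b) are bounded. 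There is no genuine obstacle here; the mild point to flag is just that the definition of reducing places the $\lambda$-comparison on $\lambda(m/f(d))$ (the ``intermediate'' denominator) exactly so that it cancels under composition, which is what makes the chain rule work.
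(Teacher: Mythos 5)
Your proposal is correct and follows the same route as the paper: the (b) verification via the telescoping decomposition $\frac{\ell/h(d)}{n/d}=\frac{\ell/g(f(d))}{m/f(d)}\cdot\frac{m/f(d)}{n/d}$, bounding the product both by $1$ and by the product of $\lambda$-ratios, is exactly the paper's computation, and the paper simply declares (a) and (c) immediate where you spell them out. No gaps.
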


\begin{proof}To see that $g\circ f$ satisfies requirement (b) in Definition \ref{def:redf}, we have \begin{eqnarray*}\frac{\ell/(g\circ f)(d)}{n/d}&=&\frac{\ell/(g\circ f)(d)}{m/f(d)}\cdot \frac{m/f(d)}{n/d}\\ &\leq&\min\!\left\{1,\,\frac{\lambda(\ell/(g\circ f)(d))}{\lambda(m/f(d))}\right\}\cdot\min\!\left\{1,\,\frac{\lambda(m/f(d))}{\lambda(n/d)}\right\}\\ &\leq&\min\!\left\{1\cdot 1,\,\frac{\lambda(\ell/(g\circ f)(d))}{\lambda(m/f(d))}\cdot\frac{\lambda(m/f(d))}{\lambda(n/d)}\right\}\\&=&\min\!\left\{1,\,\frac{\lambda(\ell/(g\circ f)(d))}{\lambda(n/d)}\right\}.\end{eqnarray*} Requirements (a) and (c) are immediate.\end{proof}

When combined, Lemmas \ref{lem:prod} and \ref{lem:compose} allow us to manipulate a prime factorization one comprehensible piece at a time. We have already seen through an example how to reduce to those $n$ whose $\omega(n)$ distinct prime factors are exactly $2,3,...,p_{\omega(n)}$. It turns out we can do even better: if $p$ and $q$ are primes with $2\neq p\leq q$ and $a$ and $b$ are integers with $0\leq a\leq b$, then there is a reducing function $f:\Dcal(p^aq^b)\to\Dcal(p^bq^a)$. It is defined by $f(p^iq^j)=p^{i+k}q^{j-k}$, where $k = \max(0,\min(i+j,b)-a)$. This allows us to rearrange prime exponents in decreasing order (except for the exponent of $2$). That is, to obtain bounds on $|\Mcal_x(n)|$, we need only consider those $n$ that are products of primorials up to a power of $2$. We will not prove that this function is reducing, because its purpose is subsumed by the next family of reducing functions. These not only rearrange exponents in decreasing order, they also limit the rate at which exponents can decrease.

\begin{lemma}\label{lem:expadd}Let $p$ and $q$ be distinct odd primes, let $a$ and $b$ be nonnegative integers, and set $c=\lfloor(a+1)/(b+2)\rfloor$. If $q< p^c$, then $p^aq^b$ reduces to $p^{a-c}q^{b+1}$.\end{lemma}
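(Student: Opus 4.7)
The plan is to construct an explicit reducing function $f:\Dcal(p^aq^b)\to\Dcal(p^{a-c}q^{b+1})$. Writing $n=p^aq^b$ and $m=p^{a-c}q^{b+1}$, we have $m/n=q/p^c$, and requirements (a) and (b) of Definition~\ref{def:redf} together force $f(d)/d\in[q/p^c,\,1]$. The two ``safe'' moves available under the sole hypothesis $q<p^c$ are the identity and the substitution $p^c\mapsto q$, so the natural strategy is to partition $\Dcal(n)$ into ``stayers'' (mapped by the identity) and ``shifters'' (mapped by $d\mapsto dq/p^c$) in a way that keeps $f$ injective and consistent with (b).

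Concretely, with the row-dependent threshold $s_j:=a+1-c(j+1)$, I will set
$$f(p^iq^j)=\begin{cases}p^iq^j & \text{if }i<s_j,\\ p^{i-c}q^{j+1} & \text{if }i\ge s_j.\end{cases}$$
Stayers satisfy $i\le s_j-1\le a-c$, so they lie in $\Dcal(m)$. Shifters need $i\ge c$, which follows from $i\ge s_j\ge s_b=a+1-c(b+1)\ge c$; the last inequality is precisely $c\le(a+1)/(b+2)$, the defining property of $c$. Injectivity reduces to ruling out a collision between a stayer $(i,j)$ and a shifter at $(i+c,j-1)$; but the shifter condition $i+c\ge s_{j-1}$ rearranges to $i\ge s_j$, contradicting the stayer condition.

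Next the three requirements of Definition~\ref{def:redf} must be checked. Requirement (a) is immediate, as $q/p^c<1$. Requirement (c) is trivial: since $p$ and $q$ are odd, $f(d)/f(d')=2^i$ forces $i=0$, so $f(d)=f(d')$, and injectivity yields $d=d'$. Requirement (b) splits by case: for shifters, a direct computation gives $m/f(d)=n/d$ exactly, so the main ratio and the $\lambda$-ratio both equal $1$; for stayers, $m/f(d)=(q/p^c)(n/d)$, so the main ratio is $q/p^c<1$, and it remains to show $\lambda(m/f(d))/\lambda(n/d)\ge q/p^c$.

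The $\lambda$-ratio bound for stayers is the main obstacle. It lies in $\{1,\,p/q,\,q/p\}$, and only the value $p/q$ is problematic (it would demand $p^{c+1}\ge q^2$, which is not given). I will rule this case out by a short analysis: $\lambda(m/f(d))=p$ forces both $p<q$ and $a-c-i\ge 1$, while $\lambda(n/d)=q$ combined with $p<q$ forces $a-i=0$; these are incompatible since together they require $a\le a-c-1$. Hence the $\lambda$-ratio is always $1$ or $q/p$, both of which exceed $q/p^c$ (the hypothesis $q<p^c$ forces $c\ge 1$, which suffices), completing requirement (b) and hence the lemma.
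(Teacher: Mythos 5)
Your proof is correct and follows essentially the same route as the paper: both construct an explicit ``staircase'' reducing function that fixes divisors below a row-dependent linear threshold on the exponent of $p$ and applies the substitution $p^c\mapsto q$ above it, then verify (a)--(c) of Definition~\ref{def:redf} case by case. The only difference is that your threshold $a+1-c(j+1)$ exceeds the paper's $(b+1-j)c$ by the constant remainder $a+1-c(b+2)\geq 0$, which changes the function but not the substance of the argument; your explicit elimination of the problematic $\lambda$-ratio value $p/q$ is a welcome bit of care that the paper's proof leaves implicit.
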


\begin{proof}Define $f:\Dcal(p^aq^b)\to\Dcal(p^{a-c}q^{b+1})$ by $f(p^iq^j)=p^iq^j$ if $i<(b+1-j)c$ and $f(p^iq^j)=p^{i-c}q^{j+1}$ if $i \geq (b+1-j)c$. We claim $f$ is a reducing function.

Suppose $i<(b+1-j)c$. The nontrivial assertion behind $f(p^iq^j)\in\Dcal(p^{a-c}q^{b+1})$ is that $i\leq a-c$. Indeed, $i\leq (b+1-j)c-1\leq (b+1)c-1=(b+2)c-c-1\leq (a+1)-c-1=a-c$. Requirements (a) and (c) are straightforward to check, so let us check (b), still in the case $f(p^iq^j)=p^iq^j$. We have $$\frac{p^{a-c}q^{b+1}/p^iq^j}{p^aq^b/p^iq^j}=\frac{q}{p^c}\leq \min\!\left\{1,\frac{q}{p}\right\}\leq \min\!\left\{1,\,\frac{\lambda(p^{a-c}q^{b+1}/p^iq^j)}{\lambda(p^aq^b/p^iq^j)}\right\}.$$

Next suppose $i \geq (b+1-j)c$. In this case it is clear that $f(p^iq^j)\in\Dcal(p^{a-c}q^{b+1})$. For requirement (b), $$\frac{p^{a-c}q^{b+1}/p^{i-c}q^{j+1}}{p^aq^b/p^iq^j}=1=\frac{\lambda(p^{a-c}q^{b+1}/p^{i-c}q^{j+1})}{\lambda(p^aq^b/p^iq^j)}.$$ Again, (a) and (c) are immediate in the case $i \geq (b+1-j)c$.\end{proof}

Next is a family of reducing functions devoted to controlling the exponent of $2$ in a prime factorization. Ultimately, $2$ will play the role of $p$ below. 

Both in the lemma statement and its proof, the empty product is to be interpreted as 1.

\begin{lemma}\label{lem:2exp}Let $p,q_1,...,q_k$ be primes with $p<q_1<\cdots<q_k$, and let $a\in\N$. If $p^{a-2} > q_1\cdots q_{k-1}q_k^2$ then $p^a$ reduces to $p^{b}q_1\cdots q_k$, where $$b=\left\lfloor\frac{1}{2}\left(a-\frac{\log(q_1\cdots q_{k-1})}{\log p}\right)\right\rfloor.$$\end{lemma}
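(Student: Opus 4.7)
The plan is to construct an explicit reducing function $f \colon \Dcal(p^a) \to \Dcal(p^b q_1 \cdots q_k)$ and verify the three conditions of Definition~\ref{def:redf}. To prepare, note that the definition of $b$ gives $2b \leq a - \log_p(q_1 \cdots q_{k-1})$, hence $p^{a-2b} \geq q_1 \cdots q_{k-1}$; combining this with the hypothesis $p^{a-2} > q_1 \cdots q_{k-1} q_k^2$ also yields $p^{2b+2} > p^2 q_k^2$, so $p^b > q_k$. Hence every $q_j$ satisfies $q_j < p^b$ and can be ``absorbed'' into the range $[0,b]$ of $p$-exponents available in the codomain.

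For the construction, every divisor of the codomain has the form $p^{i'} \prod_{j \in S} q_j$ with $0 \leq i' \leq b$ and $S \subseteq \{1, \ldots, k\}$. For $0 \leq i \leq b$ set $f(p^i) = p^i$. For $i > b$, define $f(p^i)$ by a walk through the divisor lattice in which the primes $q_k, q_{k-1}, \ldots, q_1$ are introduced into $S$ one at a time (largest first), with the exponent $i'(i) \in [0,b]$ chosen greedily so that $f(p^i) \leq p^i$, so that $f$ is injective, and so that the walk terminates at $f(p^a) = p^b q_1 \cdots q_k$. Inserting the largest $q_j$ first is essential, since requirement (b) demands that the leftover product $\prod_{j \notin S(i)} q_j$ stay small relative to $p^{a-i}$, which is exactly what placing large $q_j$'s into $S$ early accomplishes.

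Requirement (a) holds by construction. Requirement (c) is vacuous when $p = 2$, since all divisors of both domain and codomain are then powers of $2$; for odd $p$ every $f(p^i)$ is odd, so (c) reduces to injectivity of $f$, which holds because distinct $i$'s produce distinct pairs $(i'(i), S(i))$.

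Verifying (b) is the main technical obstacle. For $i < a$ one has $\lambda(m/f(d)) \geq p = \lambda(n/d)$ (since every $q_j > p$), so the minimum in Definition~\ref{def:redf}(b) equals $1$ and the condition simplifies to
\[
p^{b - i'(i)} \prod_{j \notin S(i)} q_j \;\leq\; p^{a-i}.
\]
Checking this along the walk reduces, after tracking the changes in $i'(i)$ and $S(i)$ at each insertion step, to the two inequalities $p^{a-2b} \geq q_1 \cdots q_{k-1}$ (the defining property of $b$) and $p^b > q_k$; both follow from the hypothesis, and the $q_k^2$ (rather than $q_k$) appearing in the hypothesis is precisely what provides the slack needed to absorb the floor in the definition of $b$.
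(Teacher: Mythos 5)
Your overall strategy is the same as the paper's: build an explicit injection $\Dcal(p^a)\to\Dcal(p^bq_1\cdots q_k)$ that fixes $p^i$ for small $i$ and, as $i$ grows, trades powers of $p$ for the primes $q_k,q_{k-1},\dots,q_1$ in decreasing order while keeping the $p$-exponent in $[0,b]$. Your preliminary deductions ($p^{a-2b}\geq q_1\cdots q_{k-1}$ and $p^b>q_k$) are correct, and your observation that requirement (b) collapses to $p^{b-i'}\prod_{j\notin S}q_j\leq p^{a-i}$ because $\lambda(m/f(d))\geq p$ is consistent with how the paper handles that requirement.

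The gap is that the function is never actually defined, and the existence of a ``greedy'' choice satisfying (a), (b), injectivity, and $f(p^a)=p^bq_1\cdots q_k$ simultaneously is asserted rather than proved---and this is precisely where all the work in the lemma lies. Concretely: for a given $i$ with suffix set $S=\{j+1,\dots,k\}$, conditions (a) and (b) confine the exponent $i'$ to the interval $\left[\,b+\log_p(q_1\cdots q_j)-(a-i),\; i-\log_p(q_{j+1}\cdots q_k)\,\right]$, whose length is $a-b-\log_p(q_1\cdots q_k)$ and can be less than $1$; one must exhibit, for every $i$, a choice of $j$ for which this interval meets $\Z\cap[0,b]$, and do so injectively. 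The paper accomplishes this by fixing thresholds $c_j=\lceil\log(q_1\cdots q_j)/\log p\rceil$ for $j<k$ (and $c_k=a-b$), setting $f(p^i)=p^{b+c_j+i-a}q_{j+1}\cdots q_k$ with $j$ maximal such that $c_j\leq a-i$, and then running a three-way case analysis ($j<k-1$, $j=k-1$, $j=k$) in which the hypothesis $p^{a-2}>q_1\cdots q_{k-1}q_k^2$ is consumed with essentially no slack (the case $j=k-1$ yields only $b+c_j+i-a>-1$, which integrality then upgrades to $\geq 0$). Your claim that the verification ``reduces to'' the two inequalities $p^{a-2b}\geq q_1\cdots q_{k-1}$ and $p^b>q_k$ is not substantiated: the rounding introduced by the floor in $b$ and by the integrality of the $p$-exponents is exactly what the extra factor in the hypothesis must absorb, and until the insertion points and exponents are written down explicitly there is no proof that the walk you describe exists.
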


\begin{proof}Let $c_k = a-b$ and $c_j=\lceil\log(q_1\cdots q_j)/\log p\rceil$ for $0\leq j<k$. We consider the case $j=k$ at the end of the proof. Define $f:\Dcal(p^a)\to\Dcal(p^{b}q_1\cdots q_k)$ by $f(p^{i}) = p^{b+c_j+i-a}q_{j+1}\cdots q_k$, where $j$ is the largest index such that $c_j\leq a-i$. We claim $f$ is a reducing function. 

The nontrivial assertion behind $f(p^i)\in\Dcal(p^bq_1\cdots q_k)$ is that $b+c_j+i-a\geq 0$. To verify this inequality, consider first the case $j<k-1$. The first inequality below follows from the choice of $j$, the second inequality uses the definitions of $c_j$ and $c_{j+1}$ (and assumes $j<k-1$), and the last inequality is the hypothesis $p^{a-2} > q_1\cdots q_{k-1}q_k^2$: 

\begin{eqnarray*}
b+c_j+i-a\geq b+c_j-c_{j+1}+1&\geq& b-\left\lceil\frac{\log q_{j+1}}{\log p}\right\rceil+1\\
&=&\left\lfloor\frac{1}{2}\left(a-\frac{\log(q_1\cdots q_{k-1})}{\log p}\right)\right\rfloor-\left\lceil\frac{\log q_{j+1}}{\log p}\right\rceil+1\\
&>&\frac{1}{2}\left(a-\frac{\log(q_1\cdots q_{k-1})}{\log p}\right)-\frac{\log q_{j+1}}{\log p}-1\\
&>&\frac{1}{2}\left(a-\frac{\log(q_1\cdots q_{k-1}q_k^2)}{\log p}\right)-1\\
&>& 0.
\end{eqnarray*}
In the case $j=k-1$ we must have $a-i\leq c_k-1=a-b-1$ by choice of $j$, so 
\begin{eqnarray*}
b+c_j+i-a&\geq& 2b+c_{k-1}+1-a\\
&=&2\left\lfloor\frac{1}{2}\left(a-\frac{\log(q_1\cdots q_{k-1})}{\log p}\right)\right\rfloor+\left\lceil\frac{\log(q_1\cdots q_{k-1})}{\log p}\right\rceil+1 -a\\ &>&2\left(\frac{1}{2}\left(a-\frac{\log(q_1\cdots q_{k-1})}{\log p}\right)-1\right)+\frac{\log(q_1\cdots q_{k-1})}{\log p}+1 -a\\
&=&-1.
\end{eqnarray*}
And finally, if $j=k$ then $b+c_j+i-a=i\geq 0$.

Now we turn to the bound $f(p^{i})\leq p^{i}$ from Definition \ref{def:redf}. If $j=k$ then $f(p^i)=p^i$. Otherwise,
\begin{eqnarray*}
\frac{\log (f(p^i)/p^i)}{\log p}&=&b+c_j-a+\frac{\log(q_{j+1}\cdots q_k)}{\log p}\\
&\leq& b-a+1+\frac{\log(q_1\cdots q_k)}{\log p}\\
&\leq&-\frac{1}{2}\left(a+\frac{\log(q_1\cdots q_{k-1})}{\log p}\right)+1+\frac{\log(q_1\cdots q_k)}{\log p}\\
&<&-\frac{1}{2}\left(2+\frac{2\log(q_1\cdots q_k)}{\log p}\right)+1+\frac{\log(q_1\cdots q_k)}{\log p}\\
&\leq& 0.
\end{eqnarray*}
To verify requirement (b), $$\frac{p^bq_1\cdots q_k/f(p^i)}{p^a/p^i}=\frac{q_1\cdots q_j}{p^{c_j}}\leq 1\leq \frac{\lambda(p^bq_1\cdots q_k/f(p^i))}{\lambda(p^a/p^i)},$$ where the final inequality above uses $p<q_1,...,q_k$. Requirement (c) is trivially satisfied.\end{proof}

Let us now identify those numbers that cannot be reduced by Lemma \ref{lem:expadd} or \ref{lem:2exp}. These are the numbers $n$ that we use to determine the maxima of $|\Mcal_x(n)|$, as made precise in Theorem \ref{thm:redn}. 

Throughout the remainder of this section, $p_i$ denotes the $i^\text{th}$ prime number.

\begin{definition}\label{def:redn}An integer $2^{a_1}3^{a_2}5^{a_3}\cdots$ (where $a_i=0$ for sufficiently large $i$) is \emph{reduced} if \begin{equation}\label{eq:1}\left\lfloor\frac{a_i+1}{a_j+2}\right\rfloor<\frac{\log p_j}{\log p_i}\end{equation} whenever $i,j\neq 1$, and $2^{a_1}<8p_j^2$ whenever $a_j=0$.\end{definition}

As examples, the first odd reduced numbers that are less than 100 are 1, 3, 9, 15, and 45. Up to a power of $2$, these numbers are products of primorials. This is always true, as mentioned before Lemma \ref{lem:expadd} and proved below. Also note the restriction on how quickly exponents can decrease. This is exhibited by the fact that 27 is not a reduced number---the exponent decrease from $3^3$ to $5^0$ is too much.

\begin{lemma}\label{lem:decrease}If $2^{a_1}3^{a_2}5^{a_3}\cdots$ is reduced, then $a_2\geq a_3\geq \cdots$.\end{lemma}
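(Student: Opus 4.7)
The plan is to argue by contradiction. Suppose $2^{a_1}3^{a_2}5^{a_3}\cdots$ is reduced, yet there is some index $i\geq 2$ with $a_i < a_{i+1}$. I would like to contradict inequality (\ref{eq:1}), evaluated with the roles of the two indices \emph{swapped} relative to how one usually reads that definition: take the indices in (\ref{eq:1}) to be $i+1$ (playing the role of ``$i$'' in the definition) and $i$ (playing the role of ``$j$''). Both are $\neq 1$, so (\ref{eq:1}) applies and reads
\[
\left\lfloor\frac{a_{i+1}+1}{a_i+2}\right\rfloor<\frac{\log p_i}{\log p_{i+1}}.
\]

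The right-hand side is strictly less than $1$ because $p_i<p_{i+1}$, so the floor on the left must equal $0$, forcing $a_{i+1}+1<a_i+2$, i.e.\ $a_{i+1}\leq a_i$. This directly contradicts the hypothesis $a_i<a_{i+1}$, completing the argument.

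So the ``proof'' is essentially one line once one notices that the asymmetric-looking condition (\ref{eq:1}) is in fact symmetric enough to be applied with indices reversed, and that the right-hand side $\log p_j/\log p_i$ is less than $1$ whenever $p_j<p_i$. There is no real obstacle here; the only thing worth flagging is the implicit use of the convention that $a_i=0$ for large $i$ is allowed (the inequality still makes sense and still gives $a_{i+1}\leq a_i$, which is compatible with any $a_{i+1}=0$). No estimates on $2^{a_1}$ are needed, so the condition ``$2^{a_1}<8p_j^2$ whenever $a_j=0$'' from Definition~\ref{def:redn} plays no role in this particular lemma.
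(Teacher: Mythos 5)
Your proof is correct and is essentially the paper's own argument: the paper likewise observes that for $i>j$ (both $\neq 1$) the right-hand side of (\ref{eq:1}) is less than $1$, while $a_i>a_j$ would force the left-hand side to be at least $1$. Note there is no index ``reversal'' to justify—Definition~\ref{def:redn} imposes (\ref{eq:1}) for \emph{all} pairs $i,j\neq 1$, so your instantiation is immediate.
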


\begin{proof}On the one hand, if $i > j$ in inequality (\ref{eq:1}) then the right-hand side is less than 1. On the other hand if $a_i>a_j$ then the left-hand side is at least 1.\end{proof}

\begin{lemma}\label{lem:addone}Let $p_k$ be the largest prime divisor of $n\in\N$. If $n$ is reduced, so is $np_{k+1}$.\end{lemma}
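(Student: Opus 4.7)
The plan is to verify the two defining inequalities in Definition \ref{def:redn} for $np_{k+1}$ directly, reducing each one to a statement already known for $n$. Write $n = 2^{a_1}p_2^{a_2}\cdots p_k^{a_k}$ with $a_k\geq 1$, so that in the representation used in Definition \ref{def:redn}, $np_{k+1}$ differs from $n$ only in the $(k+1)$-st exponent, which goes from $0$ to $1$. Every other exponent, including $a_1$, is unchanged.

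The second condition, $2^{a_1} < 8p_j^2$ whenever $a_j = 0$, is essentially automatic: the set of zero-exponent indices for $np_{k+1}$ is $\{j : j > k+1\}$, which is a subset of the corresponding set $\{j : j > k\}$ for $n$, so each required bound is inherited from the reducedness of $n$.

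For the first condition, $\lfloor (a_i+1)/(a_j+2)\rfloor < \log p_j/\log p_i$ for $i,j\neq 1$, I would split into cases according to whether $i$ or $j$ equals $k+1$. When both differ from $k+1$, the inequality is literally the one already known for $n$. When $j = k+1$, the denominator rises from $a_j+2=2$ (in $n$) to $3$ (in $np_{k+1}$), so the inequality becomes strictly weaker than the corresponding one for $n$ and is inherited. When $i = k+1$ and $j > k+1$, the left side equals $\lfloor 2/2\rfloor = 1$ while $\log p_j/\log p_{k+1} > 1$ strictly; when $i = j = k+1$, the left side is $0$. Both of these are immediate.

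The one substantive step, which I would flag as the main obstacle, is the remaining sub-case $i = k+1$ with $2 \leq j \leq k$, where the inequality reads $\lfloor 2/(a_j+2)\rfloor < \log p_j/\log p_{k+1}$ and the right side is less than $1$. Here I would invoke Lemma \ref{lem:decrease}: since $n$ is reduced and $a_k \geq 1$, the exponents on odd primes are nonincreasing, so $a_j \geq a_k \geq 1$ for every $2 \leq j \leq k$. Then $a_j + 2 \geq 3$ forces the floor to $0$, closing the last case. Without this monotonicity one could not rule out an intermediate prime with exponent zero, which would produce $\lfloor 2/2\rfloor = 1$ against a right-hand side less than $1$. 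Beyond this appeal, the proof is bookkeeping.
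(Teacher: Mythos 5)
Your proof is correct and takes essentially the same route as the paper's: the same case split on whether $i$ or $j$ equals $k+1$, with Lemma \ref{lem:decrease} supplying $a_j\geq 1$ for $j\leq k$ in the one nontrivial sub-case. (You additionally verify the $2^{a_1}<8p_j^2$ condition explicitly, which the paper's proof leaves implicit.)
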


\begin{proof}Only the exponent $a_{k+1}$ has changed, so we need only verify (\ref{eq:1}) when $i=k+1$ or $j=k+1$.

First suppose $i=k+1$ (so $a_i=1$ for $np_{k+1}$). If $j\leq k+1$ then $a_j\geq 1$ by Lemma \ref{lem:decrease} applied to $n$. Thus $$\left\lfloor\frac{a_i+1}{a_j+2}\right\rfloor \leq \left\lfloor \frac{2}{3}\right\rfloor = 0 < \frac{\log p_j}{\log p_i}.$$ If $j > k+1$ then $a_j = 0$. So $$\left\lfloor\frac{a_i+1}{a_j+2}\right\rfloor = 1 < \frac{\log p_j}{\log p_{k+1}}=\frac{\log p_j}{\log p_i}.$$

Now suppose $j=k+1$ and $i\neq k+1$. Here the fraction $(a_i+1)/(a_j+2)$ has decreased by adding the factor of $p_{k+1}$. So if inequality (\ref{eq:1}) holds for $n$, it certainly holds for $np_{k+1}$\end{proof}

\begin{theorem}\label{thm:redn}For any integer $n\geq 2$ there exists a reduced integer $m$ such that $n\leq m \leq 4n-6$ and $|\Mcal_x(n)|\leq|\Mcal_x(m)|$ for all $x\in\R$.\end{theorem}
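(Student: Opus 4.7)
The proof plan is to reduce $n$ through a composition of reducing functions until it becomes a reduced integer $n^* \leq n$, and then to invoke Theorem~\ref{thm:inject} to obtain a reduced integer of size comparable to $n$ in the desired range $[n, 4n-6]$.

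For the first stage, write $n = 2^{b_1} q_1^{c_1} \cdots q_\ell^{c_\ell}$ with odd primes $q_1 < \cdots < q_\ell$. I would proceed in three steps. (i) Iteratively apply the prime-swap reducing function from the example after Lemma~\ref{lem:prod}, extended to the full factorization via Lemma~\ref{lem:prod}, to replace each $q_j$ with $p_{j+1}$; the target prime $p_{j+1}$ is absent at each stage because the smaller odd primes have already been placed. This produces $n_1 = 2^{b_1} 3^{c_1} 5^{c_2} \cdots p_{\ell+1}^{c_\ell} \leq n$. (ii) While inequality (\ref{eq:1}) of Definition~\ref{def:redn} fails for some odd-prime indices $i,j$, apply Lemma~\ref{lem:expadd} to transfer exponent from $p_i$ to $p_j$; each application strictly decreases the integer, and the procedure terminates because only finitely many nonnegative exponent tuples lie below the current one. (iii) If the $2$-exponent still violates $2^{a_1} < 8p_j^2$ for some $a_j = 0$, apply Lemma~\ref{lem:2exp} with enough new primes to yield a reduced integer $n^*$. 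Lemmas~\ref{lem:prod} and~\ref{lem:compose} assemble the whole chain into a single reducing function $\Dcal(n) \to \Dcal(n^*)$.

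Theorem~\ref{thm:inject} then gives $|\Mcal_x(n)| \leq |\Mcal_x(2^a n^*)|$ where $a$ is smallest with $2^a n^* \geq n$, so that $n \leq 2^a n^* < 2n$. If $2^a n^*$ happens to be reduced, set $m = 2^a n^*$; the bound $m < 2n \leq 4n - 6$ holds for $n \geq 3$, and the tiny case $n = 2$ is checked directly (with $m = 2$). Otherwise the only possible failure is the $2^{a_1}$-condition, since the odd-prime support and exponents of $2^a n^*$ agree with those of $n^*$ and so satisfy (\ref{eq:1}) automatically. Applying Lemma~\ref{lem:2exp} once more and rebalancing by a further multiplication by a power of $2$ yields a reduced $m$ with $m < 2 \cdot 2n = 4n$.

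The principal obstacle is the bookkeeping of this second round. One must verify that the hypothesis $2^{a-2} > q_1 \cdots q_{k-1}q_k^2$ of Lemma~\ref{lem:2exp} is satisfied at each invocation, show that a single re-reduction suffices (by choosing the new primes $q_1, \ldots, q_k$ aggressively enough that the resulting $2$-exponent leaves room to absorb the subsequent multiplication into the primorial structure and thereby stays reduced), and finally identify the small residual slack that sharpens the crude bound $m < 4n$ to the claimed $m \leq 4n - 6$.
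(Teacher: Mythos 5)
Your outline tracks the paper's strategy closely—normalize the odd part with prime swaps and Lemma~\ref{lem:expadd}, control the power of $2$ with Lemma~\ref{lem:2exp}, and convert everything to a bound on $|\Mcal_x|$ via Theorem~\ref{thm:inject}—but it stops exactly where the real work begins, and you say so yourself. The three items you defer as ``bookkeeping'' are the substance of the proof. (1) Verifying the hypothesis $2^{a-2}>q_1\cdots q_{k-1}q_k^2$ and the final condition $2^{a_1}<8p_{\ell+1}^2$ is not routine: the paper does it by choosing $\ell$ \emph{maximal} with $p_{k+1}\cdots p_{\ell-1}p_\ell^2<2^{a-2}$ and then running an explicit logarithmic estimate showing $a_1<3+2\log p_{\ell+1}/\log 2$. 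Without that choice and that computation you have no reason to believe the output is reduced, and ``choosing the new primes aggressively enough'' is an assertion, not an argument. (2) You also omit the check that adjoining the new primes $p_{k+1},\dots,p_\ell$ preserves inequality (\ref{eq:1}); this is Lemma~\ref{lem:addone} in the paper, applied $\ell-k$ times, and it is needed because the new primes enter with exponent $1$ and create new pairs $(i,j)$ to verify. (3) The constant $4n-6$ is not ``residual slack'' to be found later: it comes from applying Theorem~\ref{thm:inject} exactly twice, each time with the parity observation that the rounded-up value $2^am'$ is even and strictly below $2n$, hence at most $2n-2$, giving $2(2n-2)-2=4n-6$. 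Your plan, which performs the $2$-adic fix \emph{before} the first application of Theorem~\ref{thm:inject} and then patches afterward, risks needing a third round-up by a power of $2$ (each application of Lemma~\ref{lem:2exp} shrinks the number, so each must be followed by another doubling), and you give no termination argument for that loop beyond asserting one iteration suffices.

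A structural suggestion that would simplify your first stage: rather than iterating reductions and arguing termination by well-ordering, the paper simply takes $m'$ to be the odd part of the \emph{smallest} integer to which $n$ reduces; minimality plus Lemma~\ref{lem:expadd} immediately forces the exponents of $m'$ to satisfy (\ref{eq:1}), with no explicit iteration needed. The paper then applies Theorem~\ref{thm:inject} once to reach $2^am'\leq 2n-2$, applies Lemma~\ref{lem:2exp} to that specific $2^a$ (not to the original power of $2$ in $n$), and applies Theorem~\ref{thm:inject} a second time. Reorganizing your argument in that order eliminates the circularity you flagged and makes the $4n-6$ bound and the reducedness check fall out of the two estimates described above. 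As written, the proposal is a correct plan with the decisive verifications missing.
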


\begin{proof}Let $m'$ be the odd part of the smallest positive integer to which $n$ can be reduced. By Lemma \ref{lem:expadd}, the exponents in the prime factorization of $m'$ satisfy (\ref{eq:1}). Let $a$ be the smallest integer such that $2^am'\geq n$. Then $|\Mcal_x(n)|\leq |\Mcal_x(2^am')|$ for all $x\in\R$ by Theorem \ref{thm:inject}. Note that $2^am'\leq 2n-2$. 

Let $p_k$ be the largest prime dividing $2^am'$, and if one exists, let $\ell$ be the largest index satisfying $p_{k+1}\cdots p_{\ell-1}p_\ell^2 < 2^{a-2}$. If no such index exists, let $\ell=k$. We claim that $m=2^{a_1}m'p_{k+1}\cdots p_\ell$ meets our theorem's requirements, where $a_1$ is the smallest integer such that $m\geq 2^am'$. From another application of Theorem \ref{thm:inject}, this time applied to the reduction in Lemma \ref{lem:2exp}, we have $|\Mcal_x(2^am')|\leq |\Mcal_x(m)|$ for all $x\in\R$. Since $$m\leq 2(2^am')-2\leq 2(2n-2)-2= 4n-6,$$ we will be done provided $m$ is reduced.

Apply Lemma \ref{lem:addone} $\ell-k$ times beginning with the reduced integer $m'$ to see that $m'p_{k+1}\cdots p_{\ell}$ is reduced, meaning (\ref{eq:1}) holds. Let us check that $2^{a_1}<8p_{\ell+1}^2$. We have $$3+\left\lfloor\frac{2\log p_{\ell+1}}{\log 2}\right\rfloor+\frac{\log(p_{k+1}\cdots p_\ell)}{\log 2} > 2+\frac{\log(p_{k+1}\cdots p_\ell\, p_{\ell+1}^2)}{\log 2}\geq a,$$ where the last inequality above uses maximality of $\ell$. Thus $3+\lfloor2\log p_{\ell+1}/\log 2\rfloor$ solves the inequality for which $a_1$ is the minimal solution, implying $a_1 < 3+2\log p_{\ell+1}/\log 2$ as desired.\end{proof}

Reduced numbers turn out to be sufficiently rare for our purpose. Data up to $x\approx 10^{10000}$ suggests that $12\log x$ is a very good approximation for the number of reduced $n\leq x$. This density could potentially be diminished further via new reducing functions, though the authors suspect that Definition \ref{def:redf} is too restrictive to allow for a notion of reduced numbers with density approaching that of the superior highly composite numbers (less than $\log x$ for large $x$). Definition \ref{def:redf} might be loosened, however, to permit functions $f:\Dcal(n)\to\Dcal(m)$ with ratios $$\alpha\coloneqq\max_{d\in\Dcal(n)}\frac{f(d)}{d}\hspace{0.5cm}\text{and}\hspace{0.5cm}\beta\coloneqq\max_{d\in\Dcal(n)}\frac{(m/f(d))\lambda(n/d)}{(n/d)\lambda(m/f(d))}$$ that exceed 1. Then, as long as $\alpha\leq \beta$, we could prove a version of Theorem \ref{thm:inject} that requires $2^am\geq \beta n$ in order to conclude $|\Mcal_x(n)|\leq |\Mcal_{\alpha x}(2^a m)|$ for all $x$.

\subsection{An asymptotic bound}

Our strategy for bounding $|\Mcal_x(n)|$ asymptotically is as follows: We need only consider reduced $n$ -- that is the purpose of the last section -- and reduced integers are not too far from being products of one or two primorials (Lemma \ref{lem:cubes}). This makes $\Omega(n)$ roughly equal to $\log n/\log\log n$ (Lemma \ref{lem:Omega}). If $x=n^\alpha$ then we expect elements of $\Mcal_x(n)$ to be products of roughly $\alpha\Omega(n)$ primes (Lemma \ref{lem:combo}), so we just apply Stirling's formula to bound how many ways we can choose these primes (Theorem \ref{thm:asym}).

\begin{lemma}\label{lem:cubes}For a reduced integer $2^{a_1}3^{a_2}\cdots p_k^{a_k}$, $$\sum_{a_i\geq 3}(a_i-2) = O\!\left(\frac{k^{2/3}}{(\log k)^{1/3}}\right).$$\end{lemma}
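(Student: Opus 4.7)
The plan is to squeeze both the support size and the pointwise magnitude out of a single instance of the reduced-number inequality (\ref{eq:1}). Since $p_k$ is the largest prime divisor of $n$ we have $a_{k+1}=0$, so taking $j=k+1$ in (\ref{eq:1}) will yield, for every $i\neq 1$,
\[
\left\lfloor\frac{a_i+1}{2}\right\rfloor<\frac{\log p_{k+1}}{\log p_i}.
\]
From this single inequality I will read off two facts at once: first, a pointwise bound $a_i-2\leq 2\log p_{k+1}/\log p_i$; and second, by integrality of the floor, the support bound $a_i\geq 3\Rightarrow p_i<p_{k+1}^{1/2}$ (because $a_i\geq 3$ forces the floor to be at least $2$). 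The edge case $i=1$ is handled separately by the extra clause $2^{a_1}<8p_{k+1}^2$ in Definition \ref{def:redn}, which contributes only $O(\log k)$ and can be absorbed.

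Next I will combine these two ingredients. Setting $N=\pi(p_{k+1}^{1/2})$, the prime number theorem gives $p_{k+1}=O(k\log k)$, hence $N=O(\sqrt{k/\log k})$ and $\log N=\Theta(\log k)$. Since the nonzero contributions to the sum come from $i\in\{2,\dots,N\}$ (plus the negligible $i=1$ term), the sum splits as
\[
\sum_{a_i\geq 3}(a_i-2)\;\leq\; 2\log p_{k+1}\sum_{i=2}^{N}\frac{1}{\log p_i}+O(\log k).
\]
A routine partial-summation argument applied to $\pi(X)=O(X/\log X)$ gives $\sum_{i=2}^{N}1/\log p_i=O(N/\log N)$, so the whole quantity is $O(\log k)\cdot O(\sqrt{k/\log k}/\log k)=O(\sqrt{k/\log k})$, which is well inside the claimed $O(k^{2/3}/(\log k)^{1/3})$.

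The main obstacle, if there is one, is recognizing that $j=k+1$ is the right instance of (\ref{eq:1}) to use: it simultaneously minimizes $a_j+2$ (making the floor large, hence forcing the support of $\{a_i\geq 3\}$ to lie below $p_{k+1}^{1/2}$) and keeps $\log p_j$ as small as possible, which controls the pointwise size of $a_i$. Once this choice is made, everything else reduces to standard prime-counting asymptotics, and I do not expect any genuine difficulty beyond verifying the Mertens-type sum.
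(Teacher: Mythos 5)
Your proposal is correct and follows essentially the same route as the paper's proof: set $j=k+1$ in (\ref{eq:1}), extract a pointwise bound $a_i<2\log p_{k+1}/\log p_i$ and a support bound on the set $\{i:a_i\geq 3\}$, handle $i=1$ via the clause $2^{a_1}<8p_{k+1}^2$, and finish with partial summation and the prime number theorem. The only difference is that by exploiting integrality of the floor you get the sharper support bound $p_i<p_{k+1}^{1/2}$ in place of the paper's $p_i<p_{k+1}^{2/3}$, yielding the stronger conclusion $O(\sqrt{k/\log k})$, which of course still implies the stated estimate.
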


\begin{proof}By setting $j$ in Definition \ref{def:redn} equal to $k+1$, we see that $a_i<2\log p_{k+1}/\log p_i$ for any $i\geq 2$, and that $a_1 < 3+2\log p_{k+1}/\log 2$. In particular, if $a_i\geq 3$ then $p_i<p_{k+1}^{2/3}$. Let $x=p_{k+1}^{2/3}$. Our established inequalities followed by partial summation gives 
\begin{eqnarray*}
\sum_{a_i\geq 3}(a_i-2) &<& 3+2\!\sum_{p_i<x}\!\left(\frac{\log p_{k+1}}{\log p_i}-1\right)\\
&=& 3+2\pi(x)\!\left(\frac{\log p_{k+1}}{\log x}-1\right)+\int_2^x\!\frac{\pi(t)\log p_{k+1}}{t(\log t)^2}dt\\
&=& O(\pi(x)).
\end{eqnarray*}
Replacing $x$ with $p_{k+1}^{2/3}$ and applying the prime number theorem up to a constant multiple completes the proof.\end{proof}

A small deficiency in our reducing functions from Section \ref{sec:reduce} is that they do nothing to bound the index at which prime exponents of a reduced number must switch from 2 to 1. In fact, reduced numbers can be perfect squares. This is why the previous lemma can only bound sums of exponents that are at least 3 rather than at least 2, and thus why the proof of the next lemma must consider products of two primorials instead of a single primorial.

\begin{lemma}\label{lem:Omega}Let $\Omega(n$ denote the number of prime factors of $n$, counted with multiplicity. For a reduced integer $n$, $$\Omega(n)= \frac{\log n}{\log\log n}+O\!\left(\frac{\log n}{(\log\log n)^2}\right).$$\end{lemma}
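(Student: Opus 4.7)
The plan is to exploit the structural constraints on reduced integers (Definition~\ref{def:redn}, Lemma~\ref{lem:decrease}) to express $\log n$ and $\Omega(n)$ in terms of two integer parameters, then compare. Let $n = 2^{a_1}\prod_{i=2}^k p_i^{a_i}$ with $p_k$ the largest prime factor, and let $m$ denote the largest index $\geq 2$ with $a_m \geq 2$ (or $m=1$ if no such index exists). By Lemma~\ref{lem:decrease} we have $a_i \geq 2$ for $2 \leq i \leq m$ and $a_i = 1$ for $m < i \leq k$. Bounding $a_1 = O(\log k)$ (from $2^{a_1} < 8p_{k+1}^2$) and the exponent-$\geq 3$ contribution via Lemma~\ref{lem:cubes}, direct summation gives
$$\log n = \theta(p_k) + \theta(p_m) + O(k^{2/3}(\log k)^{2/3}), \qquad \Omega(n) = k + m + O(k^{2/3}/(\log k)^{1/3}).$$

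Next, I apply the prime number theorem together with Cipolla's asymptotic $p_j = j\log j + j\log\log j + O(j)$ to obtain $\theta(p_j) = j\log j + j\log\log j + O(j)$. Substituting yields
$$\log n = k\log k + k\log\log k + m\log m + m\log\log m + O(k),$$
so $\log n \asymp k\log k$ and, with $K = \log k$, $L = \log\log k$, and $t = m\log m/(kK) \in [0,1]$, one computes $\log\log n = K + L + \log(1+t) + O(L/K)$.

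Expand the quotient $\log n/\log\log n$ and collect terms at order $k/K$, tracking the subleading $O(kL/K^2)$ contributions (which are $o(k/K)$ and thus absorbed) to arrive at
$$\frac{\log n}{\log\log n} = k + \frac{m\log m}{\log k} + O\!\left(\frac{k}{\log k}\right).$$
Subtracting from $\Omega(n) = k + m + O(k/\log k)$ (since $k^{2/3}/(\log k)^{1/3} = o(k/\log k)$) gives
$$\Omega(n) - \frac{\log n}{\log\log n} = \frac{m(\log k - \log m)}{\log k} + O\!\left(\frac{k}{\log k}\right),$$
and elementary calculus bounds $m(\log k - \log m) \leq k/e$ (maximum at $m = k/e$), so the difference is $O(k/\log k)$. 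Since $k \asymp \log n/\log\log n$ implies $k/\log k \asymp \log n/(\log\log n)^2$, the lemma follows. The main obstacle is the expansion of $\log n/\log\log n$ in the third paragraph: naive truncation produces an $O(kL/K)$ error, exceeding the claimed bound by a factor of $\log\log k$. To recover the sharper $O(k/\log k)$ one must verify that the $ku = m\log\log m/\log k$ term in the numerator and the $ktL/K = m\log m \log\log k/(\log k)^2$ term arising from the denominator combine to give only $m L(K-M)/K^2 = O(kL/K^2)$, which is $o(k/K)$.
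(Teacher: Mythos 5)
Your proof is correct and follows essentially the same route as the paper's: both reduce to the two-primorial structure $p_k\#\,p_m\#$ of a reduced integer via Lemmas \ref{lem:decrease} and \ref{lem:cubes} and then apply Chebyshev-type bounds on $\vartheta(p_k)$ and $\vartheta(p_m)$. The only real difference is the final comparison—the paper bounds the ratio $\Omega(n)\log\log n/\log n$ while keeping $\log(k\log k)$ unexpanded, which sidesteps the cancellation between the $m\log\log m/\log k$ term and the denominator's cross term that you correctly identify and resolve at the end.
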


\begin{proof}Suppose $n$ is reduced, and let $m$ be the largest factor of $n$ that is cube-free. So $m=p_k\# p_j\#$ for some $j\leq k$, where $p_j\#$ can be deleted if $m$ happens to be a primorial. 

We have two initial claims: $$\log n > (k+j)\log (k\log k)-3k$$ and (the crude bound) $$\log\log n < 2\log(k\log k),$$ both when $n$ and thus $k$ are large. To prove each of them, we will use standard bounds on Chebyshev's theta function, $$k(\log (k\log k)-1) < \vartheta(p_k) < k\log (k\log k)$$ (and similarly for $\vartheta(p_j)$ if $j$ is not bounded by some absolute constant) \cite{dusart}. First, we have 
\begin{align}\label{eq:2}\log n \geq \vartheta(p_k)+\vartheta(p_j) &>  k(\log (k\log k)-1) +  j(\log (j\log j)-1)\nonumber\\ &=(k+j)\log (k\log k)-(k+j) + j\log\!\left(\frac{j\log j}{k\log k}\right).\end{align} The smaller terms in the final expression are bounded multiples of $k$: \begin{equation}\label{eq:3}k+j\leq 2k,\hspace{0.5cm}\text{and}\hspace{0.5cm} -j\log\!\left(\frac{j\log j}{k\log k}\right)< \frac{k}{e}\left(1+\frac{1}{\log j}\right)<k.\end{equation} Combining (\ref{eq:2}) and (\ref{eq:3}) shows that $\log n > (k+j)\log (k\log k)-3k$ as desired. For the second claim, we have $\Omega(n/m)<k^{2/3}$ by Lemma \ref{lem:cubes}, so 
\begin{align}\label{eq:4}\log\log n &= \log (\log(n/m)+\log m)\nonumber\\&\leq \log(k^{3/2}\log p_k + \vartheta(p_k)+\vartheta(p_j))\nonumber\\ &< \log(3\vartheta(p_k))\nonumber\\&< 2\log(k\log k).\end{align}

Now we can combine our two initial claims as follows: \begin{eqnarray*}\frac{\Omega(n)\log\log n}{\log n} &<& \frac{(k^{2/3}+k+j)\log\log n}{\log n}\\ &<&\frac{(k^{2/3}+k+j)\log((k+j)\log (k\log k)-3k)}{(k+j)\log (k\log k)-3k}\\&<&\frac{(k+j)\log (k\log k)+2k}{(k+j)\log (k\log k)-3k}\\ &=& 1+O\!\left(\frac{1}{\log(k\log k)}\right)\\ &=& 1+O\!\left(\frac{1}{\log\log n}\right).\end{eqnarray*} Scaling both ends of the inequality above by $\log n/\log\log n$ completes the proof.\end{proof}

The notation below and the lemmas that follow it are purely combinatorial. We phrase them in the language of divisors for convenience.

\begin{notation}\label{not:Ck}For $n,k\in\Z$ with $n\geq 1$, let $C_k(n)=|\{d\in\Dcal(n):\Omega(d)=k\}|$.\end{notation}

So $C_k(n)$ counts the $k$-element multisets of the $\Omega(n)$-element multiset consisting of the prime factors of $n$ with multiplicity. In particular, if $n$ is square-free then $C_k(n)$ is just a binomial coefficient.

\begin{lemma}\label{lem:multiset} For any $n\in\N$, if $k\leq \Omega(n)/2$ then $C_{k-1}(n)\leq C_k(n)$. If $k\geq \Omega(n)/2$ then $C_k(n)\geq C_{k+1}(n)$.\end{lemma}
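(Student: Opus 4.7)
The plan is to encode $C_k(n)$ as the coefficient of $x^k$ in a generating polynomial and then exploit the symmetry and unimodality of its coefficient sequence. Writing $n = p_1^{a_1}\cdots p_r^{a_r}$, each divisor $d\mid n$ corresponds to a tuple $(e_1,\dots,e_r)$ with $0\le e_i\le a_i$ and $\Omega(d) = \sum_i e_i$, so
$$C_k(n) = [x^k]\,f_n(x), \qquad f_n(x) := \prod_{i=1}^r\bigl(1 + x + x^2 + \cdots + x^{a_i}\bigr).$$
This polynomial has degree $\Omega(n)$, and the lemma is equivalent to the claim that its coefficient sequence is unimodal with peak near $\Omega(n)/2$.

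First I would dispose of symmetry. The involution $d\mapsto n/d$ on $\Dcal(n)$ sends divisors with $\Omega(d) = k$ bijectively to divisors with $\Omega(d) = \Omega(n) - k$, so $C_k(n) = C_{\Omega(n)-k}(n)$; i.e.\ the coefficient sequence of $f_n$ is palindromic. Palindromicity also shows that the two halves of the lemma are equivalent (apply either to $j = \Omega(n) - k$), so it suffices to prove $C_{k-1}(n)\le C_k(n)$ for $k\le \Omega(n)/2$.

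For unimodality I would induct on $r$. The base case $r = 1$ is trivial since $(1,1,\dots,1)$ is unimodal. For the inductive step, factor $f_n = f'\cdot(1 + x + \cdots + x^a)$ with $a = a_r$, where $f'(x) = \sum_j c_j x^j$ is palindromic and unimodal of degree $d = \Omega(n) - a$ by hypothesis. The new coefficients $c'_k = c_{k-a} + c_{k-a+1}+\cdots + c_k$ (with out-of-range $c_j$ set to $0$) satisfy the telescoping identity $c'_k - c'_{k-1} = c_k - c_{k-a-1}$. The palindromic-unimodal property of $(c_j)$ is equivalent to the statement $c_i\ge c_j$ whenever $|i - d/2|\le |j - d/2|$, and a short computation shows that $|k - d/2|\le |k - a - 1 - d/2|$ precisely when $k \le (d + a + 1)/2$, which just exceeds the midpoint $\Omega(n)/2 = (d + a)/2$. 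Thus $(c'_k)$ is non-decreasing through the midpoint, and combined with palindromicity of $f_n$ (immediate from the symmetry argument applied to $n$ itself) this yields unimodality, completing the induction.

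The only real calculation is the telescoping-with-symmetry bookkeeping in the inductive step, which I expect to be the main fiddly point but is entirely routine once the ``closer to $d/2$ means larger'' reformulation of palindromic-unimodality is in hand. An alternative path is to cite the classical theorem of Stanley that the product of two polynomials with palindromic and unimodal nonnegative coefficient sequences is itself palindromic and unimodal, bypassing the explicit calculation and giving the lemma in one stroke from the observation that each factor $1 + x + \cdots + x^{a_i}$ trivially has this property.
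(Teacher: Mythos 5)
Your proof is correct, but it takes a genuinely different route from the paper. The paper's proof is a two-line appeal to the de Bruijn--Tengbergen--Kruyswijk symmetric chain decomposition of the divisor lattice $\Dcal(n)$: the multiset $\{\Omega(d):d\in\Dcal(n)\}$ decomposes into runs of consecutive integers centered at $\Omega(n)/2$, from which unimodality of $C_k(n)$ is immediate. You instead identify $C_k(n)$ with the coefficients of $\prod_i(1+x+\cdots+x^{a_i})$ and prove that a product of palindromic unimodal polynomials with nonnegative coefficients is palindromic unimodal, either by the telescoping identity $c'_k-c'_{k-1}=c_k-c_{k-a-1}$ together with the ``closer to the center means larger'' reformulation, or by citing the classical product theorem. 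The bookkeeping checks out (the condition $|k-d/2|\le|k-a-1-d/2|$ does reduce to $k\le(d+a+1)/2$, which covers the midpoint $\Omega(n)/2$, and the convention $c_j=0$ outside $[0,d]$ is compatible with the reformulation since out-of-range indices are always farther from the center than in-range ones). What each approach buys: yours is self-contained and elementary, needing no external combinatorial structure theorem; the paper's choice is motivated by economy, since the same symmetric chain decomposition is reused immediately in the proof of Lemma~\ref{lem:combo}, where one needs not just the counts $C_k(n)$ but an explicit injection from $\Mcal_x(n)$ into a rank level of the divisor lattice --- something the generating-function argument does not provide.
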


\begin{proof}In \cite{bruijn} it is shown that $\Dcal(n)$ can be partitioned into ``symmetric chains" of the form $\{d_1,...,d_j\}$, where $\Omega(d_1)+\Omega(d_j)=\Omega(n)$ and $\Omega(d_{i+1})=\Omega(d_i)+1$ for all $i=1,...,j-1$. So the multiset $\{\Omega(d):d\in\Dcal(n)\}$ is a disjoint union of sequences of consecutive integers, each centered at $\Omega(n)/2$.\end{proof}

\begin{lemma}\label{lem:combo}Given $n\in\N$ and $x\geq 1$, let $k$ be an integer that is closest to $\Omega(n)/2$ in the range $$\min\{\Omega(d):d\in\Mcal_x(n)\}\leq k\leq \max\{\Omega(d):d\in\Mcal_x(n)\}.$$ Then $|\Mcal_x(n)|\leq C_k(n).$\end{lemma}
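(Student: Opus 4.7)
The plan is to prove the bound via the symmetric chain decomposition of $\Dcal(n)$ already invoked in the proof of Lemma~\ref{lem:multiset}. Write $\Dcal(n)$ (partially ordered by divisibility) as a disjoint union of chains $\{d_1,\ldots,d_j\}$ satisfying $\Omega(d_{i+1})=\Omega(d_i)+1$ and $\Omega(d_1)+\Omega(d_j)=\Omega(n)$; this exists by \cite{bruijn}. By Definition~\ref{maxdivdefn}, $\Mcal_x(n)$ is an antichain under divisibility (no element divides another), so each symmetric chain meets $\Mcal_x(n)$ in at most one element. The entire proof reduces to showing that every symmetric chain that meets $\Mcal_x(n)$ also contains a divisor of $\Omega$-value exactly $k$. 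Granted this, sending each $d\in\Mcal_x(n)$ to the unique level-$k$ element of the chain containing it is an injection into $\{d'\in\Dcal(n):\Omega(d')=k\}$, whose cardinality is $C_k(n)$.

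Write $k_{\min}$ and $k_{\max}$ for the two extremes appearing in the statement, and fix a symmetric chain $\{d_1,\ldots,d_j\}$ containing some $d\in\Mcal_x(n)$. Because $\Omega(d_1)\leq\Omega(d_j)$ and $\Omega(d_1)+\Omega(d_j)=\Omega(n)$, we automatically have $\Omega(d_1)\leq\Omega(n)/2\leq\Omega(d_j)$. The remaining verification $\Omega(d_1)\leq k\leq\Omega(d_j)$ splits into three cases:
\begin{enumerate}[(i)]
\item If $\Omega(n)/2\in[k_{\min},k_{\max}]$, then $k\in\{\lfloor\Omega(n)/2\rfloor,\lceil\Omega(n)/2\rceil\}$, and since $\Omega(d_1),\Omega(d_j)$ are integers bracketing $\Omega(n)/2$, both of these candidates lie in $[\Omega(d_1),\Omega(d_j)]$.
\item If $\Omega(n)/2<k_{\min}=k$, then $\Omega(d_1)\leq\Omega(n)/2<k\leq\Omega(d)\leq\Omega(d_j)$.
\item If $\Omega(n)/2>k_{\max}=k$, the argument is symmetric: $\Omega(d_1)\leq\Omega(d)\leq k<\Omega(n)/2\leq\Omega(d_j)$.
\end{enumerate}
In every case the level $k$ lies in the range of $\Omega$-values covered by the chain, so the chain contains exactly one divisor of $\Omega$-value $k$, as required.

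The only technical content is the case analysis above; the rest is bookkeeping on the symmetric chain decomposition, and the conclusion $|\Mcal_x(n)|\leq C_k(n)$ then follows immediately from the injection described in the first paragraph. The main obstacle I anticipate is purely expository: making sure the case split on where $\Omega(n)/2$ falls relative to $[k_{\min},k_{\max}]$ matches the phrasing of the statement and correctly handles the parity of $\Omega(n)$.
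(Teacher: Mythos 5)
Your proof is correct and takes essentially the same approach as the paper: the symmetric chain decomposition from \cite{bruijn}, the observation that $\Mcal_x(n)$ is an antichain meeting each chain in at most one element, and the injection sending each maximal divisor to the level-$k$ element of its chain. Your three-case analysis merely makes explicit the paper's one-line justification that the chain through $d$ must pass through level $k$ because $k$ is at least as close to the center $\Omega(n)/2$ as $\Omega(d)$ is.
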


\begin{proof}Again we partition $\Dcal(n)$ into symmetric chains $\{d_1,...,d_j\}$ as in the proof of Lemma \ref{lem:multiset}. Since elements of $\Mcal_x(n)$ cannot divide one another while elements of a particular symmetric chain always divide one another, each symmetric chain contains at most one maximal divisor. This allows us to define an injection from $\Mcal_x(n)$ to $\{d\in\Dcal(n):\Omega(d)=k\}$, and the latter multiset has cardinality $C_k(n)$. Indeed, to each $d\in\Mcal_x(n)$ we associate the unique divisor $d'$ that belongs to the same symmetric chain as $d$ and satisfies $\Omega(d')=k$. Such a $d'$ always exists because we chose $k$ to be at least as close to $\Omega(n)/2$ as $\Omega(d)$, and $\Omega(n)/2$ is the ``center" over which symmetric chains are symmetric.\end{proof}

We can now prove the asymptotic bound on $|\Mcal_x(n)|$ stated in the introduction.

\begin{customthm}{1.3}\label{thm:asym}For any $\varepsilon > 0$, if $\alpha\in[\varepsilon,1-\varepsilon]$ then $$\log|\Mcal_{n^\alpha}(n)|=\log\!\left(\frac{1}{\alpha^\alpha(1-\alpha)^{1-\alpha}}\right)\!\frac{\log n}{\log\log n}+O\!\left(\frac{\log n}{(\log\log n)^2}\right).$$ The implied constant depends only on $\varepsilon$.\end{customthm}

\begin{proof}Recall from Theorem \ref{thm:redn} that an integer $n$ can be replaced with a reduced integer at most four times its size. Since the increase from $\log n/\log\log n$ to $\log 4n/\log\log 4n$ is absorbed by the error term above, we need only prove this theorem for reduced integers. So let $n=2^{a_1}3^{a_2}\cdots p_k^{a_k}$ be reduced. 

Suppose first that $\alpha\geq 1/2$. Let $d_0$ be the divisor of $n$ such that $d_0\geq n^{\alpha}$, and $\Omega(d_0)$ is minimal among all divisors exceeding $n^\alpha$. Note that $d_0$ is composed of the largest primes dividing $n$, so $\Omega(d_0)\leq \alpha\Omega(n)$. This gives $$\Omega(n/d_0)\geq (1-\alpha)\Omega(n)\geq\varepsilon\Omega(n)\geq \varepsilon (k-1)$$ (note that $a_1$ might equal 0). Since $\varepsilon$ is fixed, if $n$ is sufficiently large then Lemma \ref{lem:cubes} implies $n/d_0$ must divisible by more primes than just those whose exponent in the factorization of $n$ exceeds 2. In particular, we see that $d_0$ is not divisible by any perfect cubes. That is, $d_0 = p_k\#p_j\#/(p_i\#)^2$ for some $i\leq j$. 

Since $\lambda(n/d)d > n^{\alpha}$ for any $d\in\Mcal_{n^\alpha}(n)$, the definition of $d_0$ implies $\Omega(d)+1\geq \Omega(d_0)$ for any $d\in\Mcal_{n^\alpha}(n)$. So our goal is to bound $\Omega(d_0)-1$ from below. To this end, the exact same argument from inequalities (\ref{eq:2}) and (\ref{eq:3}) shows that $$\log(n)>(k+j)\log (k\log k)-3k$$ for large $n$, and a nearly identical argument shows that $$\log d_0 \leq (k+j-2i-1)\log (k\log k)+3k$$ for large $n$. These are the first and third inequalities below, while the fourth uses Lemma~\ref{lem:cubes}:
\begin{eqnarray}\label{eq:5}\Omega(d_0)-1=k+j-2i-1 &>& \frac{\log d_0-3k}{\log(k\log k)}\nonumber \\
&>& \frac{\alpha\log n-3k}{\log(k\log k)}\nonumber\\
&>&\alpha(k+j)-\frac{3(1+\alpha)k}{\log(k\log k)} \nonumber\\
&=& \alpha(k+j+k^{2/3})\!\left(1-\frac{3(1+\alpha)k+\alpha k^{2/3}\log(k\log k)}{\alpha(k+j+k^{2/3})\log(k\log k)}\right)\\
&>& \alpha\Omega(n)\!\left(1-\frac{10}{\log(k\log k)}\right).\nonumber
\end{eqnarray}
Note that $\alpha\geq 1/2$ to justify the constant 10 for large $k$ in the final error term. Now recall from (\ref{eq:4}) that $\log(k\log k)$ can be replaced with $(\log\log n)/2$ above. In particular if $\beta\in\R$ is such that $\beta\Omega(n)$ is the closest integer to $\Omega(n)/2$ between $\min\{\Omega(d):d\in\Mcal_x(n)\}$ and $\max\{\Omega(d):d\in\Mcal_x(n)\}$ then \begin{equation}\label{eq:6}\beta > \alpha\left(1-\frac{20}{\log\log n}\right).\end{equation} 

Lemma \ref{lem:combo} followed by Stirling's formula tells us $$|\Mcal_{n^\alpha}(n)|\leq C_{\beta\Omega(n)}(n)\leq \binom{\Omega(n)}{\beta\Omega(n)}=\Omega(n)^{O(1)}\left(\frac{1}{\beta^\beta(1-\beta)^{1-\beta}}\right)^{\!\Omega(n)}\!\!.$$ Now let $f(x) = (x-1)\log(1-x)-x\log x$, and take logarithms of the inequalities above to get
\begin{eqnarray*}
    \log|\Mcal_{n^{\alpha}}(n)| &=& O(\log\Omega(n)) + f(\beta)\Omega(n)\\
    &=& f(\beta)\!\left(\frac{\log n}{\log\log n}+O\!\left(\frac{\log n}{(\log\log n)^2}\right)\right)\\
    &\leq& \left(f(\alpha) + \frac{20\alpha|f'(\alpha)|}{\log\log n}\right)\!\left(\frac{\log n}{\log\log n}+O\!\left(\frac{\log n}{(\log\log n)^2}\right)\right)\\
    &=& f(\alpha)\frac{\log n}{\log\log n} + O\!\left(\frac{\log n}{(\log\log n)^2}\right).
\end{eqnarray*}
Both the second and last equality above use that $\alpha$ (and $\beta$) are restricted to the interval $[\varepsilon,1-\varepsilon]$. The lone inequality symbol above is justified by (\ref{eq:6}) and the mean value theorem.

We need not repeat these arguments for $\alpha < 1/2$. Indeed, the only missing piece is an analogous upper bound on $\Omega(d_1)$, where $d_1$ is the divisor of $n$ such that $d_1\leq n^{\alpha}$, and $\Omega(d_1)$ is maximal among all divisors not exceeding $n^\alpha$. But this makes $d_1=n/d_0$. So by (\ref{eq:5}), but with $\alpha$ replaced by $1-\alpha$, we have $$\Omega(d_1) = \Omega(n)-\Omega(d_0) < \Omega(n)\!\left(1-(1-\alpha)\!\left(1-\frac{20}{\log\log n}\right)\right) = \alpha\Omega(n)\!\left(1+O\!\left(\frac{\log n}{\log\log n}\right)\right).$$ The uses of Stirling's formula and the mean value theorem work again with trivial modification.\end{proof}

As a corollary, we get an asymptotic bound on the total number of divisors of $n$ bounded by $n^{\alpha}$.

\begin{corollary}\label{cor:asym}For any $\varepsilon > 0$, if $\alpha\in[\varepsilon,1/2]$ then $$\log\big|\{d\in\Z:d\,|\,n,\,d\leq n^\alpha\}\big|=\log\!\left(\frac{1}{\alpha^\alpha(1-\alpha)^{1-\alpha}}\right)\!\frac{\log n}{\log\log n}+O\!\left(\frac{\log n}{(\log\log n)^2}\right).$$ The implied constant depends only on $\varepsilon$.\end{corollary}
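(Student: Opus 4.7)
The plan is to sandwich $D_\alpha(n) := |\{d\in\Z : d\,|\,n,\, d\leq n^\alpha\}|$ between $|\Mcal_{n^\alpha}(n)|$ and a quantity only polylogarithmically larger, then apply Theorem~\ref{thm:asym} on both sides. The lower direction is immediate: $\Mcal_{n^\alpha}(n) \subseteq \{d|n : d\leq n^\alpha\}$ gives $D_\alpha(n) \geq |\Mcal_{n^\alpha}(n)|$, and the lower half of the stated equality follows from Theorem~\ref{thm:asym} applied to a suitable family of $n$.

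For the upper direction, the key idea is to partition divisors below $n^\alpha$ by their $\Omega$-value. Setting $m_\alpha(n) := \max\{\Omega(d) : d|n,\, d\leq n^\alpha\}$, we have the trivial bound
\[D_\alpha(n) = \sum_{k=0}^{m_\alpha(n)} |\{d|n : \Omega(d) = k,\, d \leq n^\alpha\}| \leq \sum_{k=0}^{m_\alpha(n)} C_k(n).\]
After passing to a reduced integer $n'$ via Theorem~\ref{thm:redn} (at the cost of a factor of at most $4$, absorbable into the error), the analysis performed on the divisor $d_1$ at the very end of the proof of Theorem~\ref{thm:asym} yields the estimate $m_\alpha(n') = \alpha\Omega(n')(1 + O(1/\log\log n'))$, since $d_1$ is precisely the divisor attaining $m_\alpha(n')$. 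For $\alpha \leq 1/2$, Lemma~\ref{lem:multiset} makes $C_k(n')$ nondecreasing on the summation range, so
\[D_\alpha(n') \leq (m_\alpha(n')+1)\, C_{m_\alpha(n')}(n') \leq (\Omega(n')+1)\binom{\Omega(n')}{m_\alpha(n')},\]
where the last inequality uses the coefficient-wise bound $1+x+\cdots+x^a \leq (1+x)^a$ already invoked in the proof of Theorem~\ref{thm:asym}.

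A Stirling estimate on the binomial, combined with Lemma~\ref{lem:Omega} and the Lipschitz continuity of $\alpha\mapsto -\alpha\log\alpha - (1-\alpha)\log(1-\alpha)$ on $[\varepsilon,1/2]$, converts the right-hand side into $\exp(\log(\alpha^{-\alpha}(1-\alpha)^{-(1-\alpha)})\, \log n/\log\log n + O(\log n/(\log\log n)^2))$. The polylogarithmic prefactor $\Omega(n')+1$ contributes only $O(\log\log n)$ in logarithm, absorbed by the error term.

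The main obstacle I anticipate is upgrading Theorem~\ref{thm:redn} from $|\Mcal_x|$ to $D_\alpha$: the injection $\hat f$ of Theorem~\ref{thm:inject} depended on the maximality of its input, whereas for $D_\alpha$ we need an injection defined on the entire downset $\{d|n : d\leq x\}$. The likely fix is a variant of $\hat f$ that also records the $2$-adic valuation of its input, using requirement~(c) of Definition~\ref{def:redf}---which forces fibers of a reducing function to be closed under multiplication by $2$---to recover $d$ from its image. Should this approach prove delicate, the alternative is to handle non-reduced $n$ directly: when $n$ has few distinct prime factors, a multiset-counting bound on $C_k(n)$ is tighter than the binomial estimate and yields the desired result immediately.
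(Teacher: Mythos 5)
Your proposal is correct in outline but takes a genuinely different route from the paper for the upper bound, and the obstacle you flag at the end is exactly the point where the two diverge. The paper never needs to transfer the full divisor count to a reduced integer: it observes that any proper divisor $d$ of $n$ with $x/2<d\leq x$ automatically lies in $\Mcal_x(n)$, because its smallest proper multiple in $\Dcal(n)$ is $d\lambda(n/d)\geq 2d>x$. Hence the whole downset $\{d\,|\,n:d\leq n^\alpha\}$ is covered by the $\lfloor\log_2 n^\alpha\rfloor+1$ sets $\Mcal_x(n)$ for $x=\lfloor n^\alpha\rfloor,\lfloor n^\alpha/2\rfloor,\dots,1$, each of which is bounded by $C_{\Omega(d_1)}(n)$ via Lemmas \ref{lem:multiset} and \ref{lem:combo}, and the logarithmic number of sets is absorbed into the error term. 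This reuses Theorem \ref{thm:asym} essentially as a black box and requires no new injection. Your route instead bounds $\sum_{k\leq m_\alpha}C_k$ directly, which forces you to extend the reduction machinery from $|\Mcal_x|$ to the full downset. Your proposed fix is sound in principle: for a single reducing function $f:\Dcal(n)\to\Dcal(m_0)$, the map $d\mapsto(f(d),v_2(d))$ is injective by requirement (c) of Definition \ref{def:redf} (if $f(d)=f(d')$ then $d=2^jd'$, and equal $2$-adic valuations force $j=0$), and the factor $v_2(n)+1=O(\log n)$ is harmless. The remaining detail you would need to check is that the image $\Dcal(m_0)$ sits inside the divisor set of the actual reduced integer $m$ produced by Theorem \ref{thm:redn}, whose construction adjusts the power of $2$ after the odd-part reduction; this is routine but not automatic. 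The paper's dyadic covering buys you a shorter proof that avoids this bookkeeping entirely; your approach, once the transfer is nailed down, is somewhat more direct combinatorially and would also bound the number of divisors with a prescribed $\Omega$-value.
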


\begin{proof}Let $x\in\R$, and suppose $d$ is a proper divisor of $n$ in $(x/2,x]$. Since $\lambda(n/d)\geq 2$, we see that $d\lambda(n/d)>x$, implying $d\in\Mcal_x(n)$. Therefore to cover the entire set of divisors in the corollary statement, it suffices to union only the sets $\Mcal_x(n)$ for $x=\lfloor n^\alpha\rfloor,\lfloor n^\alpha/2\rfloor,...,1$. There are at most $\lfloor \log_2n^\alpha\rfloor+1$ such values of $x$. By Lemmas \ref{lem:multiset} and Lemma \ref{lem:combo}, each $|\Mcal_x(n)|$ is bounded by $C_{\Omega(d_1)}(n)$, where $d_1$ is as in the previous proof: the divisor of $n$ such that $d_1\leq n^{\alpha}$, and $\Omega(d_1)$ is maximal among all divisors not exceeding $n^\alpha$. We just showed that $$\log C_{\Omega(d_1)}(n) =\log\!\left(\frac{1}{\alpha^\alpha(1-\alpha)^{1-\alpha}}\right)\!\frac{\log n}{\log\log n}+O\!\left(\frac{\log n}{(\log\log n)^2}\right),$$ and scaling $C_{\Omega(d_1)}(n)$ by $\lfloor\log_2n^\alpha\rfloor+1$ does not change this.\end{proof}

As mentioned in the introduction, when $\alpha=1/2$ Corollary \ref{cor:asym} recovers Wigert's theorem that $\log\tau(n)=(\log 2+o(1))(\log n/\log\log n)$ \cite{Wigert}.


\section{Proof of Theorem~\ref{thm:main}}\label{prooftheorem}

Further reduction to the preliminary bound of $p>10^{532}$ from Corollary \ref{cor:prelim} can now be obtained with maximal divisors. We aim to determine more precisely the minimal value of $p$ needed to guarantee the first interval in Theorem \ref{thm:redn} is empty. The second interval in Theorem~\ref{thm:Md} is ignored -- it is empty for $p>10^{141}$ as shown in the proof of Corollary \ref{cor:prelim}. This is much smaller than what we might hope to work for the first interval.

Let us give an intuitive outline of how Algorithm \ref{alg} works. Recalling Theorem \ref{thm:Md}, the first interval is empty precisely when $81M_d^4<8\sqrt{2p}$. To determine when this occurs we need upper bounds on $$M_d\coloneqq |\Mcal_d(p-1)\cup\Mcal_d(p+1)|$$ for varying $d$ and $p<10^{532}$. There are roughly $10^{529}$ such primes, so of course we cannot hope to treat them individually. Instead we apply Theorem \ref{thm:redn}, which says we can obtain bounds on $M_d$ by bounding $|\Mcal_d(n)|$ for all reduced $n$ between $p$ and $4p-2$. There are only 16,899 reduced numbers less than $4\cdot 10^{532}$, which is much more manageable.

For a reduced number $n$, Algorithm \ref{alg} begins by using Lemma \ref{lem:combo} to find an upper bound $C$ on $2|\Mcal_x(n)|$ that applies regardless of $x$. (The ``2" accounts for $p-1$ and $p+1$.) But then if $M_d<C$, we realize from the first inequality in Theorem \ref{thm:Md} that we actually only need a bound on $M_d$ that applies when $d < 81C^3/4$. So we use Lemma \ref{lem:combo} again to compute a potentially smaller $C$ that need only apply in this reduced range of $x$. The hope is to reduce $C$ until the first interval in Theorem \ref{thm:Md} that might contain $d$ is empty because $81C^3/4 \leq 2\sqrt{p}/C$.

\begin{algorithm}\label{alg}
    \DontPrintSemicolon
    \KwIn{$a,b\in\N$ defining the range $(a,b]$ in which primes are tested}
    \KwOut{$a,b$ with $a$ updated so that $\mathcal{G}_p$ is connected if $a<p\leq b$}
    \For(\commentf{$\triangleright\;$see Definition \ref{def:redn}; order doesn't matter}){\textup{reduced }$n$\textup{ from }$a$\textup{ to }$4b-2$}
    {
        $k\gets \lfloor\Omega(n)/2\rfloor$\commentr{$\triangleright\;2C_k(n)$ bounds $M_d$ from Theorem \ref{thm:Md}}
        \While(\commentf{$\triangleright\;$Theorem \ref{thm:Md}'s first interval not empty...}){$n+2<8(3C_k(n))^8$}
        {
            $j\gets\max\{\Omega(d):d\,|\,n,\,d<162C_k(n)^3\}$\;
            \If(\commentf{$\triangleright\;$...and it never will be}){$j\geq k$}
            {
                $a\gets n+1$\commentr{$\triangleright\;$connectivity test failed for $p\leq a$}
                \textbf{break}\;
            }
            $k\gets j$\;
        }
    }
    \Return{$a,b$}\commentr{$\triangleright\;$empty first interval if $a<p\leq b$}
    \caption{Connectivity test for Markoff mod-$p$ graphs for all primes in a given interval.}
\end{algorithm}

\begin{theorem}If $a$ and $b$ are outputs of Algorithm \ref{alg} and $a<p\leq b$, then $\Gcal_p$ is connected.\end{theorem}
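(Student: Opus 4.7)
The plan is to verify the hypotheses of Theorem~\ref{thm:Md}. Its second interval is empty once $p>10^{141}$, which was handled in the proof of Corollary~\ref{cor:prelim} and is subsumed by any input $a$ to Algorithm~\ref{alg} large enough to be worth testing; what remains is to rule out the first interval $\bigl(\tfrac{2\sqrt{2p}}{M_d},\tfrac{81 M_d^3}{4}\bigr)$. Given a prime $p$ with $a<p\leq b$, I would apply Theorem~\ref{thm:redn} to obtain reduced integers $m_\pm$ with $p\pm 1\leq m_\pm\leq 4(p\pm 1)-6$ and $|\Mcal_x(p\pm 1)|\leq|\Mcal_x(m_\pm)|$ for every $x$. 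Both $m_\pm$ lie in $[a,4b-2]$, so they were processed by the algorithm's main loop. The failing branch cannot have triggered on either, since it would push the output $a$ up to at least $m_\pm+1\geq p$, contradicting $p>a$. Hence for each $n\in\{m_-,m_+\}$ the while loop exited normally, producing a final value $k_t(n)$ with $n+2\geq 8\bigl(3C_{k_t}(n)\bigr)^8$; combined with $n\leq 4p-2$, this gives $p\geq 13122\,C_{k_t}(n)^8$.

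The central technical step is an invariant proved by induction on the iteration count of Algorithm~\ref{alg}: if the algorithm succeeds for $n$ with successive values $k_0>k_1>\cdots>k_t$ (where $k_0=\lfloor\Omega(n)/2\rfloor$), then for each $0\leq j\leq t$ and every $x<162\,C_{k_{j-1}}(n)^3$ (reading $C_{k_{-1}}$ as $+\infty$) one has $|\Mcal_x(n)|\leq C_{k_j}(n)$. The base case $j=0$ is the universal bound coming from Lemma~\ref{lem:multiset}. For the inductive step, the defining relation $k_j=\max\{\Omega(d):d\mid n,\ d<162\,C_{k_{j-1}}(n)^3\}$ caps the $\Omega$-value of every element of $\Mcal_x(n)$ at $k_j$ throughout the stated range; since $k_j\leq\lfloor\Omega(n)/2\rfloor$, Lemma~\ref{lem:combo} then yields the bound $C_{k_j}(n)$.

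Finally, for a divisor $d$ of $p-1$ or $p+1$ I would pick $n^*\in\{m_-,m_+\}$ that maximizes $|\Mcal_d(n^*)|$, so $M_d\leq 2|\Mcal_d(n^*)|$; let $k_0^*>\cdots>k_t^*$ denote the algorithm's sequence at $n^*$. If $d<162\,C_{k_t^*}(n^*)^3$, the invariant gives $M_d\leq 2C_{k_t^*}$, and the lower bound $p\geq 13122\,(C_{k_t^*})^8$ forces $\tfrac{2\sqrt{2p}}{M_d}\geq 162\,(C_{k_t^*})^3\geq\tfrac{81M_d^3}{4}$, making the first interval empty. Otherwise, let $j^*\in\{0,\ldots,t\}$ be the smallest index with $d\geq 162\,C_{k_{j^*}^*}(n^*)^3$; the invariant (or the universal Lemma~\ref{lem:multiset} bound when $j^*=0$) gives $M_d\leq 2C_{k_{j^*}^*}$, whence $d\geq 162\,(C_{k_{j^*}^*})^3=\tfrac{81(2C_{k_{j^*}^*})^3}{4}\geq\tfrac{81M_d^3}{4}$ and $d$ sits above the first interval. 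Either way $d$ avoids it, and Theorem~\ref{thm:Md} delivers connectivity of $\Gcal_p$. The main obstacle is the ``off-by-one'' bookkeeping: the bound $C_{k_j}$ is only valid on the range governed by the \emph{previous} threshold $162\,C_{k_{j-1}}^3$, and matching each $d$ to its correct iteration while patching the $p-1$ and $p+1$ sides together via $M_d\leq 2\max|\Mcal_d(m_\pm)|$ is what powers the case split above.
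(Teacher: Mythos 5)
Your proof is correct and follows essentially the same route as the paper's: both rest on Theorem \ref{thm:Md} (with the second interval dismissed via the $p>10^{141}$ computation), Theorem \ref{thm:redn} to pass to the reduced integers $m_\pm\in[a,4b-2]$, and an induction on the \textbf{while}-loop iterations using Lemmas \ref{lem:multiset} and \ref{lem:combo} to certify that each successive $k$ still bounds the relevant $|\Mcal_d(\cdot)|$. The only difference is one of framing — the paper argues contrapositively by fixing the witness divisor $d_0$ of a hypothetical disconnection and showing the failing branch must trigger, whereas you run the implication forward via an explicit invariant and a case split over which iteration's threshold each divisor $d$ falls under — but the mathematical content is the same.
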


\begin{proof}Suppose $p$ is a prime for which $\mathcal{G}_p$ is not connected. Assuming $a<p\leq b$, we must show that $p$ is at most the output value of $a$. By Theorem \ref{thm:Md} there is a divisor, call it $d_0\in\Dcal(p+1)\cup\Dcal(p-1)$, such that \begin{equation}\label{eq:}\frac{2\sqrt{2p}}{M_{d_0}}<d_0<\frac{81M_{d_0}^3}{4},\end{equation} where $M_{d_0}=|\Mcal_{d_0}(p-1)\cup\Mcal_{d_0}(p+1)|$. 

Let $n_{\pm}$ be the reduced integers provided by Theorem \ref{thm:redn} for $p\pm1$. According to Theorem \ref{thm:redn}, $$p\pm 1\leq n_{\pm}\leq 4(p\pm 1)-6,$$ which in turn gives $a\leq n_{\pm}\leq 4b-2$ since $a<p\leq b$. So at some point(s) in Algorithm \ref{alg}'s \textbf{for} loop, $n$ will assume the value of $n_{-}$ and $n_+$. 

Assume without loss of generality that $|\Mcal_{d_0}(n_+)|\geq |\Mcal_{d_0}(n_-)|$. Call $k\in\N$ \textit{sufficiently large} if it is at least as close to $\Omega(n_+)/2$ as anything between $$\min\{\Omega(d):d\in\Mcal_{d_0}(n_+)\}$$ and $$\max\{\Omega(d):d\in\Mcal_{d_0}(n_+)\}.$$ Lemmas \ref{lem:multiset} and \ref{lem:combo} tell us that $|\Mcal_{d_0}(n_+)|\leq C_k(n_+)$ for such $k$. Thus $$M_{d_0}\leq |\Mcal_{d_0}(p-1)|+|\Mcal_{d_0}(p+1)|\leq |\Mcal_{d_0}(n_-)|+|\Mcal_{d_0}(n_+)|\leq 2|\Mcal_{d_0}(n_+)|\leq 2C_k(n_+).$$ This combines with (\ref{eq:}) to give $$n_+\!+2\leq 4p < (3M_{d_0})^8/32\leq 8(3C_k(n_+))^3.$$ Note that the first inequality uses the upper bound on $n_+$ from Theorem \ref{thm:redn}, which also holds if $n_-$ is used instead. So the \textbf{while} loop condition in line 3 is always satisfied if $k$ is sufficiently large. 

Now, by induction on the number of \textbf{while} loop iterations completed for $n_+$, the value of $k$ used in line 3 is always sufficiently large. Indeed, the base case holds by line 2. And for the induction step, either $j$ from line 4 is at least $\lfloor\Omega(n_+)/2\rfloor$ (in which case the \textbf{while} loop terminates by lines 5 and our proof is complete by line 6), or $j$ is sufficiently large because \begin{eqnarray*}j&=&\max\{\Omega(d):d\,|\,n_+,\,d<162C_k(n_+)^3\}\\&\geq&\max\{\Omega(d):d\,|\,n_+,\,d<81M_{d_0}^3/4\}\\&\geq&\max\{\Omega(d):d\,|\,n_+,\,d\leq d_0\}\\&=&\max\{\Omega(d):d\,|\,n_+,\,d\in\Mcal_{d_0}(n_+)\}.\end{eqnarray*} Thus the \textbf{while} loop continues to iterate until the \textbf{if} condition in line 5 is met, which happens eventually since $k$ cannot decrease indefinitely. So by line 6, the output satisfies $a\geq n_{\pm}+1\geq p$.\end{proof}

Finally, we use Algorithm \ref{alg} to produce our main result.

\begin{customthm}{1.4}$\Gcal_p$ is connected for all primes $p > 863\#53\#13\#7\#5\#3^32^5\approx 3.448\cdot10^{392}$.\end{customthm}

\begin{proof}By Corollary \ref{cor:prelim}, we need only check connectivity for primes less than $10^{532}$. When $a=2$ and $b=10^{532}$ are input into Algorithm \ref{alg}, the output is $a=863\#53\#13\#7\#5\#3^32^5+1$. Since this number is not prime, the ``$+1$" has been omitted in the theorem statement.\end{proof}

The prime $p=863\#53\#13\#7\#5\#3^32^5-1471$ is the largest for which we do not know whether $\Gcal_p$ is connected.

\section{Data on Connectivity}\label{data}

Aside from justifying Algorithm \ref{alg}, Theorem~\ref{thm:Md} also provides a method for verifying connectivity of $\Gcal_p$ for a given prime $p$. Previously, proving connectivity for $\Gcal_p$ has been done in \cite{dCL} for primes less than $3000$ by computing the adjacency matrix of the graph. Due to the large amount of memory required by this method, it has limitations as to how large a prime it could handle. Most likely one could not prove connectivity for primes  larger than a few thousand using this method. Our algorithm, on the other hand, is specifically catered towards larger primes (and, indeed, is inconclusive for nearly all the primes handled in \cite{dCL}). In this section, we prove connectivity for many more primes and explore how powerful our method is regarding the size of a primes that it can handle. 

 We programmed the two conditions of Theorem~\ref{thm:Md} and performed an exhaustive search over all primes less than $10^7$ that satisfy these conditions. We found that Theorem~\ref{thm:Md} proves connectivity for $p=3, 7, 101$ and then the next prime is on the order of $10^6$, given by 
    \begin{align*}
        p=1,327,363.
    \end{align*}
    After finding this first prime with a connected Markoff mod-$p$ graph that was not handled by \cite{dCL}, we tackled two collections of primes: the first 10000 primes greater than $10^n$ and 10000 ``random" primes between $10^n$ and $10^{n+1}$ for $8 \leq n \leq 35$. By random primes, we mean that we take $10000$ numbers between $10^n$ and $10^{n+1}$ chosen uniformly at random, and then for each number find the first prime greater than it.
\begin{table}[ht!]
    \centering
    \begin{tabular}{|c|c|c|c|}
    \hline
        $n$ & $q_{1000}(10^n)$ & $q_{10000}(10^n)$ & $r_{10000}(10^n)$\\ \hline
         8 &  21.3\% & 20.22\% & 38.12\%\\
         9 &  48.1\% & 49.04\% & 67.46\% \\
         10 &  76.1\% & 76.41\% & 87.05\% \\
         11 & 90.9\% & 90.78\%  & 95.33\% \\
         12 & 96.6\% & 97.10\% & 98.29\% \\ 
         13 & 98.8\% & 98.65\% & 99.11\%\\
         14 & 99.4\% & 99.44\%& 99.52\% \\
         15 & 99.7\% & 99.74\% & 99.83\% \\
         16 & 99.7\% & 99.88\% & 99.88\%\\
         17 & 99.9\% & 99.93\%& 99.95\% \\
         18 & 100\% & 99.97\% & 100\%\\
         19 & 100\% & 99.97\% & 99.97\% \\
         20 & 99.8\% & 99.97\% & 100\%\\
         21 & 100\% & 99.99\% & 99.99\%\\
         \hline
    \end{tabular} \qquad \qquad
        \begin{tabular}{|c|c|c|c|}
        \hline
        $n$ & $q_{1000}(10^n)$ & $q_{10000}(10^n)$ & $r_{10000}(10^n)$\\ \hline
         22 & 100\% & 100\% & 100\%\\
         23 & 100\% & 100\% & 100\% \\
         24 & 100\% & 100\% & 100\% \\
         25 & 100\% & 100\% & 100\% \\
         26 & 100\% & 100\% & 100\% \\
         27 & 100\% & 100\% & 100\% \\
         28 & 100\% & 100\% & 100\% \\
         29 & 100\% & 100\% & 100\% \\
         30 & 100\% & 100\% & 100\% \\
         31 & 100\% & 100\% & 100\% \\
         32 & 100\% & 100\% & 100\% \\
         33 & 100\% & 100\% & 100\% \\
         34 & 100\% & 100\% & 100\% \\
         35 & 100\% & 100\% & 100\% \\ 
         \hline
    \end{tabular}
    \vspace{2mm}
    \caption{For each value of $8 \leq n \leq 35$, we calculate the two quantities $q_m(10^n)$ and $r_m(10^n)$. $q_{m}(10^n)$ denotes the percentage of the first $m$ primes after $10^n$ for which Theorem \ref{thm:Md} guarantees connectivity of $\Gcal_p$ and $r_{m}(10^n)$ denotes the percentage of $m$ random primes between $10^n$ and $10^{n+1}$ for which Theorem \ref{thm:Md} guarantees connectivity of $\Gcal_p$.
    }
    \label{tab:percentage}
\end{table}

Beginning at $n=31$ in the table above, the value of $M_d$ in Theorem \ref{thm:Md} can be replaced with $\tau(p-1)+\tau(p+1)$ (which can be computed quickly for primes up to at least $10^{90}$), and there is still no value of $d$ satisfying either of the inequalities for the 10,000 random primes we tested between $10^n$ and $10^{n+1}$. That is, $10^{31}$ is roughly where the Erd\"{o}s-Kac theorem takes over---the expected value of $\tau(p\pm1)$ is small enough so that it becomes extremely rare to need the improvement that comes by considering maximal divisors rather than all divisors.

\vspace{3mm}
\noindent \textbf{Example of Inconclusiveness}: Theorem \ref{thm:Md} guarantees connectedness of the Markoff mod $p$ graph given that no divisor $d$ of $p\pm1$ satisfies $\frac{2\sqrt{2p}}{M_d}<d<\frac{81M_d^3}{4}$ or $\frac{p}{6M_d}< d < \frac{8\sqrt{p}(p\pm 1)\tau(p\pm 1)}{\phi(p\pm 1)}$. From Table \ref{tab:percentage}, we see that once we are on the order of $10^{21}$, Theorem \ref{thm:Md} captures almost all primes $p$. However there are still some exceptional cases where this theorem is inconclusive.

For the first 10,000 primes greater than $10^{21}$, there is a single prime $p'$ that does not pass these two criteria, $p'=1,000,000,000,000,000,124,399$. We have 
    \begin{align*}
        p'-1 &= 2 \cdot 7 \cdot 13 \cdot 29^2 \cdot 43 \cdot 705,737 \cdot 215,288,719 \\
        p'+1 &= 2^4 \cdot 3 \cdot 5^2 \cdot 11^2 \cdot 17 \cdot 19 \cdot 23 \cdot 97 \cdot 757 \cdot 1,453 \cdot 8,689\\
        \mathrm{Number\ of\ divisors\ of\ } p'\pm1 &= \tau(p-1)+\tau(p+1) -2 = 192+11,520 - 2= 11,710\\
        \mathrm{Number\ of\ divisors\ of\ } p'\pm1 & \mathrm{\ which\ fail\ either\ bound\ of\ Theorem~\ref{thm:Md}} = 989
    \end{align*}
    The largest value that $M_d=|\Mcal_d(p'-1)\cup\Mcal_d(p'+1)|$ attains as $d$ varies over the 989 divisors of $p'\pm 1$ that fail one of the bounds in Theorem~\ref{thm:Md} is 438. An example of a divisor $d$ with $M_d=438$ is $d=1,664,125,969$. For this divisor we have 
    \begin{align*}
        \frac{2\sqrt{2p'}}{438} \approx 2.042 \times 10^8 < d \approx 1.664 \times 10^9  < 1.702 \times 10^9 \approx \frac{81\cdot 438^3}{4}.
    \end{align*}        
        Note that $\frac{p'}{6M_d} \approx 3.80518\times 10^{17}$, $\frac{8\sqrt{p'}(p'+1)\tau(p'+1)}{\phi(p'+1)} \approx 1.427 \times 10^{16}$, and $\frac{8\sqrt{p'}(p'-1)\tau(p'-1)}{\phi(p'-1)} \approx 1.302 \times 10^{14}$ so there are no divisors that can ever satisfy the second bound of Theorem~\ref{thm:Md}. 

While examples like this become exceedingly rare, they persist throughout the range in which we are able to execute Theorem \ref{thm:Md}'s test. Indeed, we have verified that our test fails for every prime $p<10^{100}$ such that $p\pm 1$ is a reduced number as defined in \ref{def:redn}. There are 591 such primes, and there are certainly many others for which our test also fails, just not enough to be picked up by our random samples of 10,000.


\section{Appendix}\label{appendix}

In this section, we make more precise some of the implied constants in the proof of the following proposition in \cite{BGS}. The point of this is to determine exactly how large an order a triple must have in order to conclude that it is connected to $\mathcal C_p$ as in the End Game in \cite{BGS}.

\begin{proposition}[Explicit version of Proposition 7 in \cite{BGS}]\label{preciseBGSEG}For $d$ dividing $p-1$ or $p+1$, a Markoff triple of order $d$ belongs to $\Ccal_p$ provided \begin{equation}\label{eq:13}d > \frac{8\sqrt{p}(p\pm 1)\tau(p\pm 1)}{\phi(p\pm 1)}\end{equation} (where the $\pm$ is determined by whether $d$ divides $p-1$ or $p+1$).\end{proposition}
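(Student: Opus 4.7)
My plan is to follow Bourgain-Gamburd-Sarnak's original End Game argument for their Proposition 7 while tracking every absolute constant explicitly. The overall strategy is to show that when $d$ is at least the stated bound, the rotation orbit of the triple under $\langle R_2 R_3 \rangle$ (fixing the first coordinate) must contain a triple whose second coordinate has multiplicative order exactly $p-1$ or $p+1$---a \emph{primitive} value---at which point a separate, earlier step from \cite{BGS} places the triple in $\Ccal_p$.

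First I would parametrize the rotation orbit via (\ref{eq:rotorbit}): writing $a = r+r^{-1}$ with $r\in\F_{p^2}^*$ of order $d$, the orbit is indexed by $n\in\Z/d\Z$, with second coordinate $b(n)$. The goal reduces to guaranteeing at least one $n$ for which $\text{ord}_p(b(n))$ equals $p-1$ or $p+1$. This step is bookkeeping on (\ref{eq:rotorbit}) and essentially a change of perspective, so it should be routine.

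Next I would count the $n$ for which $b(n)$ is \emph{not} primitive, i.e., $\text{ord}_p(b(n))$ divides some proper maximal divisor $t$ of $p\pm 1$. For each such $t$, the $b$-values with order dividing $t$ number at most $t/2$, and they correspond via the parametrization to points on the curve (\ref{czcurve}). The Weil-type bound underlying Lemma~\ref{prop:corvaja} then controls the number of $n$ for which $b(n)$ lies in this set: up to a negligible $4dt/p$ term it is bounded by a constant multiple of $\sqrt{p}$. Summing over the proper maximal divisors of $p\pm 1$---which is where the factor $\tau(p\pm 1)$ enters---yields a total count of non-primitive orbit members of order $\sqrt{p}\,\tau(p\pm 1)$ up to an explicit constant.

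The final step balances this against the orbit size $d$ and the density of primitive elements. The number of $b \in \F_p$ of order exactly $p \pm 1$ is $\phi(p\pm 1)/2$, so if the $b(n)$ were roughly equidistributed in $\F_p$ we would expect about $d\,\phi(p\pm 1)/(2(p\pm 1))$ primitive ones, and a positive count is guaranteed precisely when the stated inequality (\ref{eq:13}) holds. I expect the main obstacle to be pinning down the constant $8$ in the bound: this requires carefully tracing the constant $3$ from Theorem~2 of \cite{corvaja} through Lemma~\ref{prop:corvaja}, accounting for the factor of $2$ from the $r\leftrightarrow r^{-1}$ symmetry, and handling separately the degenerate divisors $t\in\{1,2\}$ for which the character-sum estimate is vacuous but where direct counting suffices. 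A secondary worry is verifying that restricting to maximal proper divisors truly collapses the inclusion-exclusion cleanly to a factor of $\tau(p\pm 1)$ rather than something larger.
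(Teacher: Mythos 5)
Your outline inverts the logic of the End Game in a way that cannot be repaired as written. You propose to \emph{upper-bound} the number of orbit indices $n$ for which $b(n)$ fails to be primitive by summing Weil/Corvaja--Zannier-type point counts over the maximal proper divisors $t$ of $p\pm1$, arriving at a total of order $\sqrt{p}\,\tau(p\pm1)$. But the non-primitive members of the orbit typically form a positive proportion of it: if $3\mid p-1$ and $d=p-1$, roughly a third of the $b(n)$ have order dividing $(p-1)/3$, so the non-primitive count is $\gg d$, vastly larger than $\sqrt{p}\,\tau(p\pm1)$. Consistent with this, for a maximal divisor $t$ as large as $(p\pm1)/2$ the estimate of Lemma~\ref{prop:corvaja} reads $\tfrac32\max((6td)^{1/3},4td/p)\approx 3d$ and is vacuous; that lemma is a Middle Game tool, useful only when $d$ is small. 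What the proposition actually requires is a \emph{two-sided} (equidistribution) count: for each divisor $e$ of $p\mp1$, the number $f(e)$ of solutions with $x$ in the subgroup of order $d$ and $y$ in the subgroup of order $e$ satisfies $|f(e)-(p+1)/(d'e')|<8\sqrt{p}$, where $d'=(p\mp1)/d$ and $e'=(p\mp1)/e$. This comes from applying the Weil bound to the projective completion of $\frac{(r+r^{-1})(sx^{d'}+s^{-1}x^{-d'})}{r-r^{-1}}=y^{e'}+y^{-e'}$ (after checking irreducibility over $\overline{\F}_p$), whose genus is controlled via the singular point $[0:1:0]$ and is roughly $4d'e'$; dividing by the degree $d'e'$ of the covering $[X:Y:Z]\mapsto((X/Z)^{d'},(Y/Z)^{e'})$ yields the $8\sqrt{p}$ error per divisor. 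M\"obius inversion over \emph{all} divisors $e$ of $p\mp1$ then produces the main term $d\,\phi(p\pm1)/(p\pm1)$ and the accumulated error $8\sqrt{p}\,\tau(p\pm1)$. In particular the constant $8$ has nothing to do with the constant $3$ of Corvaja--Zannier, and maximal divisors play no role in this part of the argument (they enter only the Middle Game of Section 3).

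Two further issues you would need to confront. First, your heuristic ``if the $b(n)$ were roughly equidistributed'' is exactly the statement that must be proved, and it is proved divisor-by-divisor through the curve and genus computation above, not by subtracting an upper bound for the non-primitive count from $d$. Second, when $d\mid p+1$ one has $r\notin\F_p$, so the curve must be rewritten over $\F_p$ (as in equation (42) of \cite{BGS}, using $\sum_{i}\binom{d}{2i}x^{d'-2i}(1-x^2)^i$) before the Weil bound applies; your outline treats the two cases as symmetric, which hides a genuine modification.
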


\begin{proof}Without loss of generality, let $d$ be the first coordinate order of some Markoff triple, and recall notation from (\ref{eq:7}). In Proposition 7 of \cite{BGS}, Bourgain, Gamburd, and Sarnak show that if $d$ is sufficiently large (at least $p^{1/2+\delta}$ for some $\delta > 0$ depending on $p$), then either the second or third coordinate in the orbit $$\left(r+r^{-1},\,\frac{(r+r^{-1})(r^{2n}s+r^{-2n}s^{-1})}{r-r^{-1}},\,\frac{(r+r^{-1})(r^{2n\pm1}s+r^{2n\pm1}s^{-1})}{r-r^{-1}}\right)$$ has order $p-1$ for some $n$. We will run through their argument and show that (\ref{eq:13}) is sufficient for the relevant inequalities to hold. Since every triple of order $p-1$ is in $\Ccal_p$ (Proposition 6 in \cite{BGS}), this will complete the proof.

First suppose $d\,|\,p-1$. We seek a solution $(x,y)\in\F_p^*$ to \begin{equation}\label{eq:14}\frac{(r+r^{-1})(sx+s^{-1}x^{-1})}{r-r^{-1}}=y+y^{-1}\end{equation} such that $x$ belongs to the cyclic subgroup of order $d$ (generated by $r$ in the notation above), and $y$ is a primitive root modulo $p$. We will show such a solution exists with a counting argument. 

Let $d' = (p-1)/d$, and given some $e$ dividing $p-1$, let $e' = (p-1)/e$. Consider the equation \begin{equation}\label{eq:12}\frac{(r+r^{-1})(sx^{d'}+s^{-1}x^{-d'})}{r-r^{-1}}=y^{e'}+y^{-e'}.\end{equation} Assume for the moment that $d'\geq e'$ so that the projective completion of the affine curve defined above is given by $$\frac{s(r+r^{-1})}{r-r^{-1}}X^{2d'}Y^{e'}+\frac{r+r^{-1}}{s(r-r^{-1})}Y^{e'}Z^{2d'}-X^{d'}Y^{2e'}Z^{d'-e'}-X^{d'}Z^{d'+e'}=0.$$ Call this curve $C$. Bourgain-Gamburd-Sarnak show that $C$ is irreducible over $\overline{\F}_p$. Furthermore, its geometric genus is bounded from above by $$\binom{\deg C-1}{2}-\sum_{P\in C}\binom{m_P}{2},$$ where $m_P$ denotes the multiplicity of the point $P$ in $C$. (See Corollary 1 in Section 8.3 of \cite{fulton}, for example.) Observe that $P=[0:1:0]$ has multiplicity $m_P=2d'-e'$, so the genus is at most $$\binom{2d'+e'-1}{2}-\binom{2d'-e'}{2} = 4d'e'-4d'-2e'+2.$$ Thus we can apply the Weil bound to conclude that the number of points on $C$ over $\F_p$ differs from $p+1$ by at most $2(4d'e'-4d'-2e'+2)\sqrt{p}$. Now let us exclude the points $[1:0:0]$, $[0:1:0]$, and $[0:0:1]$, which occur on $C$ with multiplicities $e'$, $2d'-e'$, and $e'$, respectively. Then, via the map $[X:Y:Z]\mapsto ((X/Z)^{d'},(Y/Z)^{e'})$, there is an $e'd'$-to-1 correspondence between the remaining points on $C$ and solutions to (\ref{eq:12}) in which $x$ belongs to the subgroup of order $d$ and $y$ to the subgroup of order $e$ in $\F_p^*$. In particular, if $f(e)$ denotes the number of such solutions $(x,y)$, then we have shown $$|d'e'f(e)+(e'+(2d'-e')+e')-(p+1)|<2(4d'e'-4d'-2e'+2)\sqrt{p}.$$ This simplifies to the following slightly weaker form: $$\left|f(e)-\frac{p+1}{d'e'}\right|<8\sqrt{p}.$$ The exact same bound can be obtained in the case $e'>d'$ by swapping $d'$ and $e'$ throughout the argument and using the singular point $[1:0:0]$ instead of $[0:1:0]$ to bound the genus.

Let $\mu$ be the M\"{o}bius function and let $\phi$ be Euler's totient function. By inclusion-exclusion, the number of solutions to (\ref{eq:12}) in which $x$ belongs to the cyclic group of order $d$ and $y$ is a primitive root is \begin{eqnarray*}\sum_{e\,|\,p-1}\mu\!\left(\frac{p-1}{e}\right)f(e)&\geq &\sum_{e'\,|\,p-1}\!\left(\mu(e')\frac{p+1}{d'e'}-8\sqrt{p}\right)\\&\geq&\frac{p+1}{d'}\\&=&\frac{(p+1)\phi(p-1)}{d'(p-1)}-8\sqrt{p}\,\tau(p-1)\\&>&\frac{d\phi(p-1)}{p-1}-8\sqrt{p}\,\tau(p-1).\end{eqnarray*} The last expression above is positive precisely when $d$ satisfies (\ref{eq:13}).

A very similar argument works when $d\,|\,p+1$. But now $r\not\in\F_p$, so a modification is needed in order to reapply the Weil bound over $\F_p$. Let $d'=(p+1)/d$. Instead of (\ref{eq:14}), we now count points on the curve $$\sum_{i=0}^{\lfloor d'/2\rfloor}\binom{d}{2i}x^{d'-2i}(1-x^2)^i=y^{e'}+y^{-e'},$$ where $e'$ is still some divisor of $p-1$ (see equation (42) in \cite{BGS}). The same singular points, $[0:1:0]$ when $d'\geq e'$ and $[1:0:0]$ when $e'\geq d'$, can be used to bound the genus of the curve above, and in fact we get an even smaller bound of $2d'e'$. The remainder of the proof is unchanged.\end{proof}

\printbibliography

\end{document}